\newtheorem{theo}{Theorem}[section]
\newtheorem{cor}[theo]{Corollary}
\newtheorem{lem}[theo]{Lemma}
\newtheorem{prop}[theo]{Proposition}
\newtheorem{defn}[theo]{Definition}
\newtheorem{Notation}[theo]{Notation} 
\theoremstyle{definition}
\newtheorem*{notaetoile}{Notation}
\def\a{{\a}}
\def\a{{\mathfrak a}}
\def\Card{\operatorname{Card}}
\def\N{\mathbb N}
\def\P{\mathbb{P}}
\def\R{\mathbb R}
\def\Z{\mathbb{Z}}
\begin{document} 

\title[An interacting particle model and A Pieri-type formula for $SO(d)$]{An interacting particle model\\ and \\ a Pieri-type formula for the orthogonal group}

\author{Manon Defosseux}
\address{Laboratoire de Math\'ematiques Appliqu\'ees \`a Paris 5, Universit\'e Paris 5, 45 rue des  Saints P\`eres, 75270 Paris Cedex 06.}
\email{manon.defosseux@parisdescartes.fr}

\begin{abstract} We introduce a new interacting particle model with  blocking and pushing interactions.   Particles evolve on $\Z_+$ jumping on their own volition rightwards or leftwards  according to geometric jumps with parameter $q\in(0,1)$. We show that the model involves a Pieri-type formula for  the orthogonal group. We prove that the two extreme cases - $q=0$ and $q=1$ -   lead respectively  to a random tiling model studied in \cite{BorodinKuan} and  a random matrix model considered in \cite{DefosseuxLUE}. 
\end{abstract}
\maketitle 
\section{introduction}
In \cite{BorodinKuan} A. Borodin  and J. Kuan consider a random tiling model with a wall which is related to the Plancherel 
measure for the orthogonal group  and thus to representation theory of this group. Similar connection holds for  the interacting particle model and the random matrix model  considered in \cite{DefosseuxLUE}.  The aim of this paper is to establish a direct link between the random tiling model on one side  and the interacting particle model or the random matrix model on the other side. For this we consider an interacting particle model depending on a parameter and show that these models correspond to different parameter values. 

The paper is organized as follows. Definition of the set of Gelfand-Tsetlin patterns for the orthogonal group is recalled in section \ref{definitions}. Section \ref{modeldescription} is devoted to the description of the particle model. We recall in section \ref{Poisson} the description of an interacting particle model equivalent to the random tiling model studied in \cite{BorodinKuan}. Models considered in that paper involve Markov kernels which can be obtained with the help of a Pieri-type formula for the orthogonal group. These Markov kernels are constructed in section \ref{representation} after recalling some elements of representation theory. We describe the matrix model  related to the particle model  in section \ref{matrices}. Results are stated in section \ref{results} and proved in section \ref{proofs}.

\textit{Acknowlegments:}  The author would like to thank Alexei Borodin for his suggestions and helpful explanations.
\section{Gelfand-Tsetlin patterns}\label{definitions}
Let  $n$ be a positive integer. For $x, y\in \mathbb{R}^{n}$ such that $x_n\le\dots\le x_1$ and $y_n\le\dots\le y_1$, we write $x \preceq y$ if $x$ and $y$ are interlaced, i.e.\@
$$x_{n}\le y_n\le x_{n-1} \le \cdots\le x_1 \le y_1 .$$
When $x\in \mathbb{R}^{n}$ and $y\in\mathbb{R}^{n+1}$ we add the relation $y_{n+1} \leq x_{n}$. We  denote by $\vert x\vert$ the vector of $\R^n$ whose components are  the absolute values of those of $x$. 
\begin{defn}  \label{defnGT} Let $k$ be a positive integer.
\begin{enumerate}  
\item We denote by $GT_{k}$  the set of Gelfand-Tsetlin patterns defined by  
\begin{eqnarray*}
GT_{k}&=&\{(x^{1},\cdots, x^{k}): x^{i}\in \mathbb{N}^{j-1}\times \mathbb{Z}\mbox{ when } i=2j-1,\\ &&\hskip 15pt   \, x^{i}\in \mathbb{N}^{j} \mbox{ when } i=2j\mbox{, and } \vert x^{i-1} \vert \preceq \vert  x^{i} \vert, 1\leq i\leq k\}.
\end{eqnarray*}
  
 \item If $x=(x^{1},\dots,x^{k})$ is a Gelfand-Tsetlin pattern, $x^{i}$ is called the $i^{th}$ row of $x$ for $i\in\{1,\dots,k\}$.
  \item For $\lambda\in \Z^{[\frac{k+1}{2}]}$  the subset of Gelfand-Tsetlin patterns    having a $k^{th}$ row equal to $\lambda$ is denoted by $GT_k(\lambda)$ and its cardinality is denoted by $s_k(\lambda)$.   
 \end{enumerate}
\end{defn}  
 
Usually,  a Gelfand Tsetlin pattern   is represented by a triangular array as indicated at figure \ref{coneB} for $k=2r$.

\begin{figure}[h!]
\begin{pspicture}(2,4)(9,0.25)
 \put(1,0){$-x^{2r}_{1}$}  \put(2.5,0){$\cdots $}\put(4.5,0){$-x^{2r}_{r}$}   \put(5.75,0){$x^{2r}_{r}$}  \put(8,0){$\cdots $}  \put(9.5,0){$x^{2r}_{1}$}

 \put(1.75,0.5){$-x^{2r-1}_{1}$}  \put(2.75,0.5){$\cdots $}\put(3.75,0.5){$-x^{2r-1}_{r-1}$}  \put(5.25,0.5){$x^{2r-1}_{r}$}   \put(6.5,0.5){$x^{2r-1}_{r-1}$}  \put(7.75,0.5){$\cdots $}  \put(8.75,0.5){$x^{2r-1}_{1}$}
 
 \put(4.5,1){$\cdots$}  \put(6,1){$\cdots$} 
     \put(3,1.5){$-x_{1}^{4}$}  \put(4.5,1.5){$-x_{2}^{4}$} \put(5.75,1.5){$x_{2}^{4}$} \put(7.125,1.5){$x_{1}^{4}$} 
       \put(3.75,2){$-x^{3}_{1}$}       \put(5.25,2){$x^{3}_{2}$}   \put(6.5,2){$x^{3}_{1}$}    
      \put(4.5,2.5){$-x^{2}_{1}$}       \put(5.75,2.5){$x^{2}_{1}$} 
    \put(5.25,3){$x^{1}_{1}$}    

\end{pspicture}
  \caption{A Gelfand--Tsetlin pattern of   $GT_{2r}$}
  \label{coneB}
\end{figure}

\section{An interacting particle model with exponential jumps}\label{modeldescription}
In this section we  construct a random process $(X(t))_{t\ge 0}$ evolving on the subset of $GT_k$ of Gelfand-Tsetlin patterns with non negative valued components. This process can be viewed as an interacting particle model. For this, we associate to a Gelfand-Tsetlin pattern $x=(x^{1},\dots,x^{k})$, a configuration of particles on the integer lattice $\Z^2$ putting one particle labeled by $(i,j)$ at point $(k-i,x_j^{i})$ of $\Z^2$ for $i\in\{1,\dots,k\}$, $j\in\{1,\dots,[\frac{i+1}{2}]\}$. Several particles can be located at the same point. In the sequel we will say "particle $x_j^i$" instead of saying "particle labeled by $(i,j)$ located at point $(x_j^{i},k-i)$". Let $q\in(0,1)$. Consider two independent families  $$(\xi_j^i(n+\frac{1}{2}))_{i=1,\dots,k,j=1,\dots,[\frac{i+1}{2}];\, n\ge 0}, \quad \textrm{and } \quad (\xi_j^i(n))_{i=1,\dots k,j=1,\dots,[\frac{i+1}{2}];\, n\ge 1},$$   of identically distributed independent random variables  such that $$\P(\xi_1^1(\frac{1}{2})=x)=\P(\xi_1^1(1)=x)=q^x(1-q), \quad x\in \N,$$
and the Markov kernel $R$ on $\N$ defined by 
$$R(x,y)=\left\{
    \begin{array}{ll}
        \frac{1-q}{1+q}(q^{\vert x-y\vert}+q^{x+y}) & \mbox{if } y\in \N^* \\
        \\
        \frac{1-q}{1+q}q^x & \mbox{otherwise,}
    \end{array}
\right.$$
for $x\in \N$. Actually  the probability measure $R(x,.)$ on $\N$ is the law of the random variable $\vert x+\xi_1^1(1)-\xi_1^1(\frac{1}{2})\vert$.

Particles evolve as follows.  At time $0$ all particles are at zero, i.e. $X(0)=0$.    All particles, except those labeled by $(2l-1,l)$ for $l\in\{1,\dots,[\frac{k+1}{2}]\}$ (i.e. particles near the wall), try to jump to the left at times $n+\frac{1}{2}$ and to the right at times $n$, $n\in \N$.  Particles labeled by $(2l-1,l)$, for $l\in\{1,\dots,[\frac{k+1}{2}]\}$, jump on their own volition at integer times only. Notice that these particles can eventually move at half-integer times if they are pushed by another particle. Suppose that at time $n$ there is one particle at point $(k-i,X_j^{i}(n))$ of $\Z^2$, for $i=1,\dots,k$, $j=1,\dots,[\frac{i+1}{2}]$.  Positions of particles are updated downward as follows. Figure \ref{figuremodel} gives an example of an evolution of a pattern between times $n$ and $n+1$. In that example, the particles $(3,2)$ and $(4,2)$ are respectively pushed by the particles $(2,1)$ and $(3,1)$ and the particles $(3,1)$ and $(4,2)$ are respectively blocked by the particles $(2,1)$ and $(3,2)$ between times $n$ and $n+\frac{1}{2}$. The particle $(3,1)$ is pushed by  the particle $(2,1)$ and the particles $(3,2)$ and $(4,2)$ are respectively  blocked by the particles $(2,1)$ and $(3,1)$ between times $n+\frac{1}{2}$ and $n+1$.
\\

\noindent \underline{At time $n+1/2$} : 
All particles except particles $X^{2l-1}_{l}(n)$ for $l\in\{1,\dots,[\frac{k+1}{2}]\}$, try to jump to the left one after another in the lexicographic order pushing the   particles in order to stay in the set of Gelfand-Tsetlin patterns and being blocked by the initial configuration $X(n)$ of the particles. Let us indicate how the first three rows of a pattern are updated at time $n+\frac{1}{2}$.
\begin{itemize}
\item Particle $X_1^1(n)$ doesn't move. We let $$X_1^1(n+\frac{1}{2})=X_1^1(n).$$
\item Particle $X_1^{2}(n)$ tries to jump to the left according to a geometric jump. It is blocked by $  X_1^1(n)  $. If it is necessary it pushes $X_2^3(n)$ to an intermediate position denoted by $\tilde{X}_2^3(n)$, i.e.
\begin{align*}
X_1^{2}(n+\frac{1}{2})&=\max \big(  X_1^1(n)  ,X_1^2(n)-\xi_1^2(n+\frac{1}{2}) \big),\\
\tilde{X}_2^3(n)&=\min \big(X_2^3(n),X_1^2(n+\frac{1}{2}) \big).
\end{align*}
\item Particle $X_1^3(n)$ tries to move to the left according to a geometric jump being blocked by $X_1^2(n)$ :
\begin{align*} 
X_1^3(n+\frac{1}{2})=\max \big(X_1^2(n),X_1^3(n)-\xi_1^3(n+\frac{1}{2}) \big).
\end{align*}
 Particle $\tilde{X}_2^3(n)$ doesn't move. We let $$X_2^3(n+\frac{1}{2})=\tilde{X}_2^3(n).$$ \end{itemize}
Suppose now that rows 1 through $l-1$ have been updated for some $l>1$. Then the particles $X_{1}^l(n),\dots,X^l_{[\frac{l+1}{2}]}(n)$ of row $l$ are pushed to intermediate positions  
\begin{align*}
\tilde{X}_i^{l}(n)&=\min\big(X_i^{l}(n),X_{i-1}^{l-1}(n+\frac{1}{2})\big), \, i\in\{1,\dots,[\frac{l+1}{2}]\},
\end{align*}
whit the convention $X_{0}^{l-1}(n+\frac{1}{2})=+\infty$. Then particles $\tilde{X}_{1}^l(n),\dots, \tilde{X}^l_{[\frac{l}{2}]}(n)$  try to jump to the left according to geometric jump being blocked as follows by the initial position $X(n)$ of the particles. For $i=1,\dots,[\frac{l}{2}]$, 
\begin{align*}
X_i^{l}(n+\frac{1}{2})&=\max\big(X_i^{l-1}(n),\tilde{X}_i^{l}(n)-\xi_i^{l}(n+\frac{1}{2})\big).
\end{align*}
When $l$ is odd, particle $\tilde{X}^l_{\frac{l+1}{2}}(n)$ doesn't move and we let 
$$X_{\frac{l+1}{2}}^{l}(n+\frac{1}{2})=\tilde{X}_{\frac{l+1}{2}}^{l}(n).$$

\noindent \underline{At time $n+1$} : All particles except particles $X^{2l-1}_{l}(n+\frac{1}{2})$ for $l\in\{1,\dots,[\frac{k+1}{2}]\}$, try to jump to the right one after another in the lexicographic order pushing  particles in order to stay in the set of Gelfand-Tsetlin patterns and being blocked by the initial configuration $X(n+\frac{1}{2})$ of the particles.  Particles $X^{2l-1}_{l}(n+\frac{1}{2})$, for $l\in\{1,\dots,[\frac{k+1}{2}]\}$, try to move on their own volition according to the law $R(X_l^{2l-1}(n+\frac{1}{2}),.)$. The first three rows are updated as follows.
\begin{itemize}
\item Particle $X_1^1(n+\frac{1}{2})$ moves according to the law $R(X_1^1(n+\frac{1}{2}),.)$ pushing $X_1^2(n+\frac{1}{2})$ to an intermediate position $\tilde{X}_1^2(n+\frac{1}{2})$ : 
\begin{align*}  
X_1^1(n+1)&= \big\vert X_1^1(n+\frac{1}{2}) +\xi_{1}^1(n+1)-\xi_1^1(n+\frac{1}{2}) \big\vert, \\
\tilde{X}_1^2(n+\frac{1}{2})&=\max\big({X}_1^2(n+\frac{1}{2}),X_1^1(n+1)\big).
\end{align*}
\item Particle $\tilde{X}_1^2(n+\frac{1}{2})$  jumps to the right according to a geometric jump pushing $X_1^3(n+\frac{1}{2})$ to an intermediate position $\tilde{X}_1^3(n+\frac{1}{2})$, i.e.
\begin{align*}
X_1^{2}(n+1)&= \tilde{X}_1^2(n+\frac{1}{2}) + \xi_1^2(n+1),\\
\tilde{X}_1^3(n+\frac{1}{2})&=\max \big(X_1^3(n+\frac{1}{2}),X_1^2(n+1) \big).
\end{align*}
\item Particle $X_2^3(n+\frac{1}{2})$  tries to move according to the law $R(X_1^1(n+\frac{1}{2}),.)$. It is blocked by $X_1^2(n+\frac{1}{2})$. Particle $\tilde{X}_1^3(n+\frac{1}{2})$ moves to the right according to a geometric jump. That is
\begin{align*} 
X_2^3(n+1)&= \max(\big\vert X_2^3(n+\frac{1}{2}) +\xi_{2}^3(n+1)-\xi_2^3(n+\frac{1}{2}) \big\vert,X_1^2(n+\frac{1}{2})), \\
X_1^3(n+1)&=\tilde{X}_1^3(n+\frac{1}{2}) +\xi_1^3(n+1).
\end{align*} 
\end{itemize}
Suppose rows 1 through $l-1$ have been updated for some $l>1$. Then particles of row $l$ are pushed to intermediate positions  
\begin{align*}
\tilde{X}_i^{l}(n+\frac{1}{2})&=\max\big(X_{i}^{l-1}(n+1),X_i^{l}(n+\frac{1}{2})\big), \, i\in\{1,\dots,[\frac{l+1}{2}]\},
\end{align*}
with the convention $X_{\frac{l+1}{2}}^{l-1}(n+1)=0$ when $l$ is odd. Then  particles $\tilde{X}_1^l(n+ \frac{1}{2}),\dots,\tilde{X}_{[\frac{l}{2}]}^l(n+ \frac{1}{2})$ try to jump to the right according to geometric jump being blocked by the initial position of the particles as follows. For $i=1,\dots,[\frac{l}{2}]$,
\begin{align*}
X_i^{l}(n+1)&=\min\big(X_{i-1}^{l-1}(n+\frac{1}{2}),\tilde{X}_i^{l}(n+\frac{1}{2})+\xi_i^{l}(n+1)\big).
\end{align*}
When $l$ is odd, particle $X_{\frac{l+1}{2}}^{l}(n+\frac{1}{2})$ is updated as follows.
\begin{align*}
X_{\frac{l+1}{2}}^{l}(n+1)&= \min(\big\vert X_{\frac{l+1}{2}}^{l}(n+\frac{1}{2}) +\xi^{l}_{\frac{l+1}{2}}(n+1)-\xi^{l}_{\frac{l+1}{2}}(n+\frac{1}{2}) \big\vert, X_{\frac{l-1}{2}}^{l-1}(n+\frac{1}{2})).
\end{align*}

\begin{figure}[h!]
\begin{pspicture}(-9.5,6)(10,-5)
\put(-5,5.5){\underline{Interactions between times n and $n+\frac{1}{2}$}}
\psline[linecolor=gray]{->}(-8,-1)(2.5,-1)
\psline[linecolor=gray]{->}(-8,1)(2.5,1) 
\psline[linecolor=gray]{->}(-8,3)(2.5,3)
\psline[linecolor=gray]{->}(-8,-3)(2.5,-3)
\psline[linecolor=gray]{->}(-7,-3.3)(-7,4.5)

\psline[linecolor=lightgray]{<-}(0.5,-0.8)(0.5,0.1)
\psline[linecolor=lightgray]{<-}(-2,1.2)(-2,2.3)
\psline[linecolor=lightgray]{<-}(-4.9,3.1)(-4.9,4.3)
\psline[linecolor=lightgray]{<-}(-3,-0.7)(-3,0.1)
\psline[linecolor=lightgray]{<-}(-3.6,1.2)(-3.6,2.3)
\psline[linecolor=lightgray]{<-}(2,-2.7)(2,-2.1)
\psline[linecolor=lightgray]{->}(-2,-1.7)(-2,-1.2)
\psline[linecolor=lightgray]{->}(-3.6,-1.7)(-3.6,-1.2) 
\psline[linecolor=lightgray]{->}(1,-3.7)(1,-3.1) 
\psline[linecolor=lightgray]{->}(0,-3.7)(0,-3.1) 
\psline[linecolor=lightgray]{->}(-2,-3.7)(-2,-3.1) 
\psline[linecolor=lightgray]{->}(-3,-3.7)(-3,-3.1)

\put(-6.2,4.5){\scriptsize{$X^{1}_1(n+\frac{1}{2})=X^1_{1}(n)$}}
\put(1.7,-2){\scriptsize{$X^4_{1}(n)$}}
\put(-0.3,-4.1){\scriptsize{$X^4_{2}(n)$}}
\put(-2.3,2.6){\scriptsize{$X^2_{1}(n)$}}
\put(0.2,0.3){\scriptsize{$X^3_{1}(n)$}}
\put(-3.3,0.3){\scriptsize{$X^3_{2}(n)$}}
\put(-4.2,2.6){\scriptsize{$X^{2}_1(n+\frac{1}{2})$}}
\put(-2.3,-2){\scriptsize{$X^{3}_1(n+\frac{1}{2})$}}
\put(-4.5,-2){\scriptsize{$X_{2}^3(n+\frac{1}{2})$}}
\put(0.7,-4.1){\scriptsize{$X_{1}^4(n+\frac{1}{2})$}}
\put(-2.3,-4.1){\scriptsize{$\tilde{X}_{2}^4(n)$}}
\put(-3.8,-4.1){\scriptsize{${X}_{2}^4(n+\frac{1}{2})$}}

\psarc{->}(-2.8,0.8){0.8}{23}{159}
\psarc{-}(-1,-2){1.8}{37}{124}
\psarc{->}(-3.3,-0.9){0.3}{0}{180}
\psarc{->}(1.5,-3){0.5}{14}{168}
\psarc{->}(-1,-3.9){1.4}{48}{135}
\psarc{-}(-3,-3.9){1.4}{48}{90}

\psline{->}(-2,-0.5)(-2,-0.9) 
\psline{->}(-3,-2.5)(-3,-2.9) 

\psdots[dotstyle=o,dotsize=4pt](-3.6,1)(-2,1)
\psdots[dotstyle=square](0.5,-1)(-2,-1)
 \psdots(-4.9,3)
 \psdots[dotstyle=square*](-3.6,-1)(-3,-1)
 \psdots[dotstyle=diamond](2,-3)(1,-3)
 \psdots[dotstyle=diamond*](-2,-3)(0,-3)(-3,-3)

\end{pspicture} 
\begin{pspicture}(-9.5,5.5)(10,-4)

\put(-5,5.5){\underline{Interactions between times $n+\frac{1}{2}$ and $n+1$}}
\psline[linecolor=gray]{->}(-8,-1)(2.5,-1)
\psline[linecolor=gray]{->}(-8,1)(2.5,1) 
\psline[linecolor=gray]{->}(-8,3)(2.5,3)
\psline[linecolor=gray]{->}(-8,-3)(2.5,-3)
\psline[linecolor=gray]{->}(-7,-3.3)(-7,4.5)

\psline[linecolor=lightgray]{<-}(-3.6,1.2)(-3.6,2.3)
\psline[linecolor=lightgray]{<-}(-1.35,1.2)(-1.35,2.3)
\psline[linecolor=lightgray]{<-}(0.1,-0.8)(0.1,0.1)
\psline[linecolor=lightgray]{<-}(-1.35,-0.8)(-1.35,0.1)
\psline[linecolor=lightgray]{->}(1,-3.7)(1,-3.1) 
\psline[linecolor=lightgray]{->}(-2,-3.7)(-2,-3.1) 
\psline[linecolor=lightgray]{->}(-3,-3.7)(-3,-3.1) 
\psline[linecolor=lightgray]{<-}(-4.9,3.2)(-4.9,4.3)
\psline[linecolor=lightgray]{<-}(-5.77,3.2)(-5.77,4.3)
\psline[linecolor=lightgray]{<-}(2.2,-2.7)(2.2,-2.1)
\psline[linecolor=lightgray]{->}(-2,-1.7)(-2,-1.2)
\psline[linecolor=lightgray]{->}(-3.6,-1.7)(-3.6,-1.2)

\put(-5,4.5){\scriptsize{$X^{1}_1(n+\frac{1}{2})$}}
\put(-6.4,4.5){\scriptsize{$X^{1}_1(n+1)$}}
\put(-1.9,2.6){\scriptsize{$X^2_{1}(n+1)$}}
\put(-4.5,-2){\scriptsize{$X_{2}^3(n+1)$}}
\put(-2.3,-4.1){\scriptsize{${X}_{2}^4(n+1)$}}
\put(-0.2,0.3){\scriptsize{$X^3_{1}(n+1)$}} 
\put(-1.6,0.3){\scriptsize{$\tilde{X}^3_{1}(n+\frac{1}{2})$}} 

\put(1.9,-2){\scriptsize{$X^4_{1}(n+1)$}}
\put(-4.2,2.6){\scriptsize{$X^{2}_1(n+\frac{1}{2})$}}
\put(-2.3,-2){\scriptsize{$X^{3}_1(n+\frac{1}{2})$}}
\put(0.7,-4.1){\scriptsize{$X_{1}^4(n+\frac{1}{2})$}}
\put(-3.8,-4.1){\scriptsize{${X}_{2}^4(n+\frac{1}{2})$}}

\psarc{->}(-5.3,2.85){0.5}{34}{150}
\psarc{<-}(-2.5,0.6){1.2}{25}{153}
\psarc{<-}(-1.65,-1){0.35}{16}{159}
\psarc{<-}(-0.6,-1.1){0.7}{16}{163}
\psarc{-}(-3.2,-0.4){0.7}{125}{225}
\psarc{-}(-2,-3.9){1.4}{90}{132} 
\psarc{<-}(1.6,-3.7){1}{55}{128} 
 
\psline{->}(-3.6,0.17)(-3.6,-0.9) 
\psline{->}(-2,-2.48)(-2,-2.9) 

\psdots[dotstyle=o,dotsize=4pt](-3.6,1)(-1.35,1)
\psdots[dotstyle=square](-2,-1)(-1.35,-1)(0.1,-1)
 \psdots(-4.9,3)(-5.75,3)
 \psdots[dotstyle=square*](-3.6,-1)
 \psdots[dotstyle=diamond](1,-3)(2.2,-3)
 \psdots[dotstyle=diamond*](-3,-3)(-2,-3)

\end{pspicture} 

\caption{An example of blocking and pushing interactions between times $n$ and $n+1$ for $k=4$. Different kinds of dots represent different particles.}
\label{figuremodel}
\end{figure} 
\section{An interacting particle model with exponential waiting times}\label{Poisson}
In this section we describe an interacting particle model on $\Z^2$ where particles try to jump by one  rightwards or  leftwards  after   exponentially distributed waiting times. The evolution of the particles is described by a random process $(Y(t))_{t\ge 0}$ on the subset  of $GT_{k}$ of Gelfand-Tsetlin patterns with non negative valued components. As in the previous model, at time $t\ge 0$ there is one particle labeled by $(i,j)$ at   point $(k-i,Y^{i}_j(t))$ of the integer lattice, for $i=1,\dots,k$, $j=1,\dots,[\frac{i+1}{2}]$. Every particle tries to jump to the left or to the right by one after independent  exponentially distributed waiting time with mean $1$.  Particles are pushed and blocked according to the same rules as previously. That is when particle labeled by $(i,j)$ wants to jump to the right at time $t\ge 0$ then
\begin{enumerate}
\item if $i,j\ge 2$ and $Y_j^i(t^-)=Y_{j-1}^{i-1}(t^-)$ then the particles don't move and $Y(t)=Y(t^-)$,
\item else particles $(i,j),(i+1,j),\dots,(i+l,j)$ jump  to the right  by one for $l$  the largest integer such that $Y_j^{i+l}(t^-)=Y_j^i(t^-)$ i.e. $$Y_j^{i}(t)=Y_j^{i}(t^-)+1,\dots,Y_j^{i+l}(t)=Y_j^{i+l}(t^-)+1.$$ 
\end{enumerate}
When particle labeled by $(i,j)$ wants to jump to the left at time $t\ge 0$ then
\begin{enumerate}
\item if $i$ is odd, $j=(i+1)/2$ and $Y_j^{i}(t^-)=0$ then particle labeled by $(i,j)$ is reflected by $0$ and everything happens as described above when this   particle try to jump to the right by one,
\item if $i$ is odd, $j=(i+1)/2$ and $Y_j^{i}(t^-)\ge 1$ then $Y_j^i(t)=Y_j^i(t^-)-1$,
\item if $i$ is even or $j\ne (i+1)/2$, and $Y_j^i(t^-)=Y_j^{i-1}(t^-)$ then particles don't move,
\item if $i$ is even or $j\ne (i+1)/2$, and $Y_j^i(t^-)>Y_j^{i-1}(t^-)$ then particles $(i,j),(i+1,j+1),\dots,(i+l,j+l)$ jump  to the left  by one for $l$  the largest integer such that $Y_{j+l}^{i+l}(t^-)=Y_j^i(t^-).$ Thus $$Y_j^{i}(t)=Y_j^{i}(t^-)-1,\dots,Y_{j+l}^{i+l}(t)=Y_{j+l}^{i+l}(t^-)-1.$$ 
\end{enumerate}

This random particle model is equivalent to a random tiling model with a wall, as it is explained in detail in \cite{BorodinKuan}.

  \section{Markov Kernel on the set of irreducible representations of the orthogonal group}\label{representation}
When a finite dimensionnal representation $V$ of a group $G$ is completely reducible, there is a natural way that we'll recall later in our particular case to associate to this decomposition a probability measure on the set of irreducible representations of $G$. The transition probabilities of the random process $(X^{k}(t),t\ge 0)$ which will be proved to be Markovian  are obtained in that manner. Actually we recover them considering decomposition into irreducible components of tensor products of particular  irreducible representations of the special orthogonal group.

Let $d$ be an integer greater than $2$. Let us recall some usual properties of  the finite dimensional representations of the compact group $SO(d)$ of   $d\times d$ orthogonal matrices with determinant equal to $1$ (see for instance \cite{Knapp} for more details).  The set of finite dimensional representations of $SO(d)$ is indexed by the set  
$$\{\lambda\in \R^r:2\lambda_r\in \N, \lambda_i-\lambda_{i+1}\in \N, i=1,\dots,r-1\},$$
when $d=2r+1$ and by the set
$$\{\lambda\in \R^r:\lambda_{r-1}+\lambda_r \in \N,\,  \lambda_i- \lambda_{i+1}\in \N,\, i=1,\dots,r-1\},$$
when $d=2r$. Actually we are only interested with representations indexed by a subset $\mathcal{W}_d$ of these sets defined by 
$$\mathcal{W}_d=\{\lambda\in \R^r:\lambda_r\in \N, \lambda_i-\lambda_{i+1}\in \N, \,i=1,\dots,r-1\},$$
when $d=2r+1$ and
$$\mathcal{W}_d=\{\lambda\in \R^r:\lambda_r\in \Z, \lambda_{r-1}+\lambda_r\in \N,\,  \lambda_i- \lambda_{i+1}\in \N,\, i=1,\dots,r-1\},$$
when $d=2r$.   For $\lambda\in \mathcal{W}_d$, using standard notations,  we denote by $V_\lambda$ the so called irreducible representation with highest weight $\lambda$ of $SO(d)$.  The subset of $\mathcal{W}_d$ of elements having non-negative components is denoted by $ \mathcal{W}^+_d$.

  Let $m$ be an integer and $\lambda$ an  element of  $\mathcal{W}_d$. Consider the irreducible representations $V_\lambda$ and $V_{\gamma_m}$ of  $SO(d)$, with $\gamma_m=(m,0,\cdots,0)$. The decomposition of the tensor product $V_\lambda\otimes V_{\gamma_m}$ into irreducible components is given by a Pieri-type formula for the orthogonal group. As it has been explained in \cite{Defosseux}, it can be deduced from \cite{Nakashima}.  We have 
    \begin{align}\label{MultB} V_{\lambda}\otimes V_{\gamma_m}=\oplus_{\beta}M_{\lambda,\gamma_m}(\beta) V_{\beta},
  \end{align}
  where the direct sum is over all  $\beta\in \mathcal{W}_d$ such that  
  \begin{itemize}
  \item when $d=2r+1$,   there exists an integer $s\in \{0,1\}$ and $c\in \mathbb{N}^{r}$ which satisfy $$\left\{
    \begin{array}{l}
      c\preceq \lambda , \quad c\preceq \beta \\
        \\
      \sum_{i=1}^{r}(\lambda_i-c_i+ \beta_i-c_i)+s=m,
    \end{array}
\right. $$ $s$ being equal to $0$ if $c_r=0$. In addition, the  multiplicity $M_{\lambda,\gamma_m}(\beta)$ of the irreducible representation with highest weight $\beta $ is the   number of $(c,s)\in\mathbb{N}^{r}\times \{0,1\}$ satisfying these relations.  
\item when $d=2r$, there exists  $c\in \mathbb{N}^{r-1}$ which verifies $$\left\{
    \begin{array}{l}
      c\preceq\vert \lambda \vert,\quad c\preceq \vert\beta \vert \\
        \\
     \sum_{k=1}^{r-1}(\lambda_k-c_k+ \beta_k-c_k)+ \vert\lambda_r-\mu_r \vert=m.
    \end{array}
\right.$$ In addition, the  multiplicity $M_{\lambda,\gamma_m}(\beta)$ of the irreducible representation with highest weight $\beta $ is the   number of $c\in\mathbb{N}^{r-1}$ satisfying these relations.  
\end{itemize}
Let us consider a family $(\mu_m)_{m\ge 0}$ of Markov kernels on $\mathcal{W}_d$ defined by 
$$\mu_m(\lambda,\beta)=\frac{\dim(V_\beta)}{\dim(V_\lambda)\dim(V_{\gamma_m})} M_{\lambda,\gamma_m}(\beta),$$
for $\lambda,\beta\in \mathcal{W}_d$ and $m\ge 0$. It is known that for $\lambda\in \mathcal{W}_d$ the dimension of $V_\lambda$ is given by the number $s_{d-1}(\lambda)$ defined in definition \ref{defnGT}.  Thus
$$\mu_m(\lambda,\beta)=\frac{s_{d-1}(\beta)}{s_{d-1}(\lambda)s_{d-1}(\gamma_m)} M_{\lambda,\gamma_m}(\beta).$$
Let $\xi_1,\dots ,\xi_{d}$ be independent geometric random variables with parameter $q$ and $\epsilon$ a Bernoulli random variable such that
$$\P(\epsilon =1)=1-\P(\epsilon =0)=\frac{q}{1+q}.$$
Consider a random variable $T$ on $\N$  defined by

$$T=\sum_{i=1}^{d-1}\xi_i+\epsilon,$$
when $d=2r+1$ and 
$$T=\vert\xi_1-\xi_2\vert+\sum_{i=3}^{d}\xi_i,$$
when $d=2r$.
\begin{lem} \label{nu} The law of $T$ is a measure $\nu$ on $\N$ defined by
$$\nu(m)=\frac{1}{1+q}(1-q)^{d-1}q^ms_{d-1}(\gamma_m),\quad m\in \N.$$
\end{lem}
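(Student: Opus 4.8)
The plan is to compute the probability generating function $\E\bigl[z^{T}\bigr]$ of $T$ in closed form and to recognise it as $\sum_{m\ge0}\nu(m)z^{m}$. The one structural input needed is the identity
$$\sum_{m\ge0}s_{d-1}(\gamma_m)\,t^{m}=\frac{1+t}{(1-t)^{d-1}},$$
which I would prove by directly counting the Gelfand--Tsetlin patterns of $GT_{d-1}$ with top row $\gamma_m=(m,0,\dots,0)$. Since $\gamma_m$ has a single nonzero entry, the interlacing constraints $\vert x^{i-1}\vert\preceq\vert x^{i}\vert$ propagate down the pattern and force $\vert x^{i}\vert=(c_i,0,\dots,0)$ for every row $i$, with $0\le c_2\le c_3\le\dots\le c_{d-1}=m$, the only remaining freedom being that the near-wall entry $x^{1}\in\Z$ ranges over $\{-c_2,\dots,c_2\}$; this description is uniform in the parity of $d-1$. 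Hence, setting $A_j(t)=\sum_{0\le c_2\le\dots\le c_j}(2c_2+1)\,t^{c_j}$, one gets $A_2(t)=\sum_{c\ge0}(2c+1)t^{c}=\frac{1+t}{(1-t)^{2}}$, and passing from $A_j$ to $A_{j+1}$ amounts to applying the partial-summation operator, that is, multiplying by $\frac{1}{1-t}$; therefore $A_{d-1}(t)=\frac{1+t}{(1-t)^{d-1}}$ and $s_{d-1}(\gamma_m)=[t^{m}]A_{d-1}(t)$, which is the claimed identity. One could instead cite $s_{d-1}(\gamma_m)=\dim V_{\gamma_m}$ together with the classical formula for the dimension of the degree-$m$ spherical harmonics of $SO(d)$.

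Next I would compute $\E\bigl[z^{T}\bigr]$. Each $\xi_i$ is geometric with parameter $q$, so $\E\bigl[z^{\xi_i}\bigr]=\frac{1-q}{1-qz}$, and $\E\bigl[z^{\epsilon}\bigr]=\frac{1+qz}{1+q}$. When $d=2r+1$, independence gives at once
$$\E\bigl[z^{T}\bigr]=\Bigl(\frac{1-q}{1-qz}\Bigr)^{d-1}\frac{1+qz}{1+q}.$$
When $d=2r$, a short computation yields $\P(\vert\xi_1-\xi_2\vert=0)=\frac{1-q}{1+q}$ and $\P(\vert\xi_1-\xi_2\vert=j)=\frac{2(1-q)}{1+q}q^{j}$ for $j\ge1$, hence $\E\bigl[z^{\vert\xi_1-\xi_2\vert}\bigr]=\frac{1-q}{1+q}\cdot\frac{1+qz}{1-qz}$; multiplying by the $d-2$ remaining factors $\E\bigl[z^{\xi_i}\bigr]$ gives the same expression for $\E\bigl[z^{T}\bigr]$ as in the case $d=2r+1$.

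Finally, applying the identity of the first paragraph with $t=qz$,
$$\E\bigl[z^{T}\bigr]=\frac{(1-q)^{d-1}}{1+q}\cdot\frac{1+qz}{(1-qz)^{d-1}}=\frac{(1-q)^{d-1}}{1+q}\sum_{m\ge0}s_{d-1}(\gamma_m)\,q^{m}z^{m}=\sum_{m\ge0}\nu(m)\,z^{m},$$
so that $\P(T=m)=\nu(m)$ for every $m\in\N$; evaluating at $z=1$ also shows $\sum_m\nu(m)=1$, so $\nu$ is indeed a probability measure. I expect the only delicate point to be the pattern count in the first paragraph — the propagation of the interlacing relations and the bookkeeping of the near-wall entry; everything else is routine generating-function manipulation.
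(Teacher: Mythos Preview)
Your proof is correct and takes a genuinely different route from the paper. The paper computes $\P(T=m)$ directly: in the odd case it conditions on $\epsilon$, in the even case it first computes the law of $|\xi_1-\xi_2|$, and in both cases it arrives at
\[
\P(T=m)=\frac{1}{1+q}(1-q)^{d-1}q^{m}\sum_{\substack{(k_1,\dots,k_{d-1})\in\N^{d-1}\\ \sum k_i=m}}\bigl(2\cdot 1_{k_1\ge 1}+1_{k_1=0}\bigr),
\]
and then identifies the inner sum with $s_{d-1}(\gamma_m)$ (essentially your chain count in difference coordinates $k_1=c_2$, $k_i=c_{i+1}-c_i$). You instead package the same combinatorics into the generating-function identity $\sum_{m}s_{d-1}(\gamma_m)t^{m}=(1+t)/(1-t)^{d-1}$, compute $\E[z^{T}]$, and match. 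Your approach has the pleasant feature that the two parities are handled uniformly once you observe that $\E[z^{T}]$ is the same closed form in both cases, and it gives the normalisation $\sum_m\nu(m)=1$ for free; the paper's approach is more elementary in that it never leaves the level of point probabilities. The one place worth a sentence of extra care in your write-up is the pattern count: for odd rows $i=2j-1$ with $j\ge2$ the sign freedom on the last coordinate is killed because $|x^i_j|=0$, so the only genuine $\Z$-freedom sits at $x^{1}$, exactly as you assert.
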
 
\begin{proof}
When $d=2r+1$, for $m=0$ the property is true. For $m\ge 1$
\begin{align*}
\P(T=m)&=\frac{q}{1+q}\P(\sum_{i=1}^{d-1}\xi_i=m-1)+\frac{1}{1+q}\P(\sum_{i=1}^{d-1}\xi_i=m)\\
&=\frac{1}{1+q}(1-q)^{d-1}q^m \Card\{(k_1,\dots,k_{d-1})\in \N^{d-1}: \sum_{i=1}^{d-1}k_i\in \{m-1,m\}\}\\
&=\frac{1}{1+q}(1-q)^{d-1}q^m\sum_{(k_1,\dots,k_{d-1})\in \N^{d-1}: \sum_{i=1}^{d-1}k_i=m}(21_{k_1\ge 1}+1_{k_1=0})\\
&=\frac{1}{1+q}(1-q)^{d-1}q^m s_{d-1}(\gamma_m).
\end{align*}
So the lemma is proved in the odd case. Moreover
$$\P(\vert \xi_1-\xi_2\vert=k )=\left\{
    \begin{array}{ll}
       2 \frac{1-q}{1+q}q^{k} & \mbox{if } k\ge 1, \\
        \\
        \frac{1-q}{1+q} & \mbox{otherwise.}
    \end{array}
\right.$$
Thus when $d=2r$, 
 \begin{align*}
\P(T=m)
&=\frac{1}{1+q}(1-q)^{d-1}q^m \sum_{(k_1,\dots,k_{d-1})\in \N^{d-1}: \sum_{i=1}^{d-1}k_i=m}(21_{k_1\ge 1}+1_{k_1=0})\\
&=\frac{1}{1+q}(1-q)^{d-1}q^m s_{d-1}(\gamma_m).
\end{align*} 
\end{proof}
Lemma \ref{nu} implies in particular that the measure $\nu$ is a probability measure. Thus one defines a Markov kernel $P_d$ on $\mathcal{W}_d$ by letting 
\begin{align}\label{Pd} 
P_d(\lambda,\beta)=\sum_{m=0}^{+\infty}\mu_m(\lambda,\beta)\nu(m),
\end{align}
for $\lambda,\beta\in \mathcal{W}_d$.  We'll  see that the kernel $P_{d}$ describes the evolution of the $(d-1)^{\textrm{th}}$ row of the random process on the set of Gelfand-Tsetlin patterns observed at integer times. 

 \begin{prop} \label{explicitPd} For $\lambda,\beta\in \mathcal{W}_d$, 
 \[
P_{d}(\lambda,\beta)=\sum_{c\in \N^r: c\preceq \lambda,\beta}(1-q)^{d-1}\frac{s_{d-1}(\beta)}{s_{d-1}(\lambda)}q^{\sum_{i=1}^{r}(\lambda_i+\beta_i-2c_i)}(1_{c_r>0}+\frac{1_{c_r=0}}{1+q})
\]
when $d=2r+1$ and
\[
P_{d}(\lambda, \beta)=\sum_{c\in \N^{r-1}: c\preceq \vert \lambda \vert, \vert\beta \vert}\frac{(1-q)^{d-1}}{q+1}\frac{s_{d-1}(\beta)}{s_{d-1}(\lambda)}q^{\sum_{i=1}^{r-1}(\lambda_i+ \beta_i-2c_i)+\vert \lambda_r-\beta_r\vert} 
\]
when $d=2r$.
 \end{prop}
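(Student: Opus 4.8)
The plan is to substitute the closed forms of $\mu_m$ and of $\nu$ (the latter from Lemma~\ref{nu}) into the series \eqref{Pd} defining $P_d$, and then interchange the summation over $m$ with the combinatorial enumeration that defines $M_{\lambda,\gamma_m}(\beta)$ in the Pieri-type formula \eqref{MultB}. Concretely, write
$$P_d(\lambda,\beta)=\sum_{m\ge 0}\frac{s_{d-1}(\beta)}{s_{d-1}(\lambda)\,s_{d-1}(\gamma_m)}\,M_{\lambda,\gamma_m}(\beta)\cdot\frac{(1-q)^{d-1}}{1+q}\,q^m\,s_{d-1}(\gamma_m).$$
The factor $s_{d-1}(\gamma_m)$ cancels, which is the one point deserving a word of care, leaving
$$P_d(\lambda,\beta)=\frac{s_{d-1}(\beta)}{s_{d-1}(\lambda)}\cdot\frac{(1-q)^{d-1}}{1+q}\sum_{m\ge 0}M_{\lambda,\gamma_m}(\beta)\,q^m.$$
All summands being nonnegative, the order of summation is immaterial, so it remains only to evaluate $\sum_{m\ge 0}M_{\lambda,\gamma_m}(\beta)\,q^m$ using the explicit description of $M_{\lambda,\gamma_m}(\beta)$ accompanying \eqref{MultB}.

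In the odd case $d=2r+1$, $M_{\lambda,\gamma_m}(\beta)$ counts the pairs $(c,s)\in\N^r\times\{0,1\}$ with $c\preceq\lambda$, $c\preceq\beta$, $s=0$ whenever $c_r=0$, and $\sum_{i=1}^r(\lambda_i+\beta_i-2c_i)+s=m$. Interchanging the two sums turns $\sum_{m}M_{\lambda,\gamma_m}(\beta)\,q^m$ into $\sum_{(c,s)}q^{\sum_i(\lambda_i+\beta_i-2c_i)+s}$, the sum now running over all admissible $(c,s)$. For fixed $c\preceq\lambda,\beta$ with $c_r>0$, the value $s$ ranges over $\{0,1\}$ and contributes $(1+q)\,q^{\sum_i(\lambda_i+\beta_i-2c_i)}$; for $c_r=0$ only $s=0$ is allowed, contributing $q^{\sum_i(\lambda_i+\beta_i-2c_i)}$. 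Collecting these and multiplying by $\tfrac{(1-q)^{d-1}}{1+q}\cdot\tfrac{s_{d-1}(\beta)}{s_{d-1}(\lambda)}$ gives exactly the stated formula, the prefactor $\tfrac1{1+q}$ being absorbed into the weight $1_{c_r>0}+\tfrac{1_{c_r=0}}{1+q}$.

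In the even case $d=2r$, $M_{\lambda,\gamma_m}(\beta)$ counts the $c\in\N^{r-1}$ with $c\preceq|\lambda|$, $c\preceq|\beta|$ and $\sum_{k=1}^{r-1}(\lambda_k+\beta_k-2c_k)+|\lambda_r-\beta_r|=m$ (the $\mu_r$ written before \eqref{MultB} being a misprint for $\beta_r$). The same interchange of summation yields $\sum_m M_{\lambda,\gamma_m}(\beta)\,q^m=\sum_{c\preceq|\lambda|,|\beta|}q^{\sum_{k=1}^{r-1}(\lambda_k+\beta_k-2c_k)+|\lambda_r-\beta_r|}$, and feeding this into the displayed reduction above produces the claimed expression directly, with no case distinction needed. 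I do not anticipate a genuine obstacle: once the $s_{d-1}(\gamma_m)$ cancellation is noted, the argument is pure bookkeeping, and the only subtlety is the (trivial) justification, via nonnegativity of the terms, for rearranging the double series.
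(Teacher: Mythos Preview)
Your proposal is correct and is exactly the approach the paper takes: the paper's proof consists of the single sentence ``The proposition follows immediately from the tensor product rules recalled for the decomposition (\ref{MultB}),'' and what you have written is precisely the unpacking of that sentence --- substitute $\mu_m$ and $\nu$ into \eqref{Pd}, cancel $s_{d-1}(\gamma_m)$, and interchange the sum over $m$ with the sum over the combinatorial data $(c,s)$ (resp.\ $c$) parametrizing the multiplicity. Your identification of the $\mu_r$/$\beta_r$ typo is also correct.
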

 \begin{proof} The proposition follows immediately from the tensor product rules  recalled  for the decomposition (\ref{MultB}).
 \end{proof}
 \section{Random matrices}\label{matrices}
  Let us denote by ${\mathcal M}_{d,d'}$ the set of $d \times d'$ real  matrices.  A standard Gaussian variable on  ${{\mathcal M}_{d,d'}}$ is a random variable having  a density with respect to the Lebesgue measure on ${\mathcal M}_{d,d'}$ equal to
  $$M\in {\mathcal M}_{d,d'} \mapsto\frac{1}{\sqrt[dd']{2\pi}} \exp(-\frac{1}{2}\mbox{tr}(MM^* )).$$
   We write ${\mathcal A}_{d}$ for the set $\{M\in \mathcal{M}_{d,d}: M+M^*=0\}$ of antisymmetric $d\times d$ real matrices, and  $i{\mathcal A}_{d}$  for the set $\{iM: M\in \mathcal{A}_d\}$. Since a matrix in $i\mathcal{A}_d$ is Hermitian, it has real eigenvalues $\lambda_1 \geq \lambda_2 \geq \cdots \geq \lambda_d$. Morever, antisymmetry implies that  $\lambda_{d-i+1}=-\lambda_{i}$, for $i=1,\cdots, [d/2]+1$, in particular $\lambda_{[d/2]+1}=0$ when $d$ is odd. Consider the subset $\mathcal{C}_{d}$ of $\R_+^{[\frac{d}{2}]}$ defined by 
$$\mathcal{C}_d=\{x\in \R^{[\frac{d}{2}]}: x_1>\dots> x_{[\frac{d}{2}]}>0\},$$
and its closure $$\bar{{\mathcal{C}}}_d =\{x\in \R^{[\frac{d}{2}]}: x_1\ge \dots\ge x_{[\frac{d}{2}]}\ge0\}.$$
 \begin{defn} \label{asymptoticdim}  We define the function  $h_d$ on $\mathcal{C}_d$ by $$h_d(\lambda)=c_d(\lambda)^{-1}V_d(\lambda), \quad \lambda\in \mathcal{C}_d,$$ where the functions $V_d$ and $c_d$ are given by :
 \begin{align*}
 V_{n}(\lambda)&= \prod_{\substack{1\le i<j\le [\frac{d}{2}]}}(\lambda_i-\lambda_j)\prod_{\substack{1\le i<j\le [\frac{d}{2}]}}(\lambda_i+\lambda_j)\prod_{ \substack{1\le i\le [\frac{d}{2}]}} \lambda_i^{\varepsilon} ,\\  c_n(\lambda)&=\prod_{\substack{1\le i<j\le [\frac{d}{2}]}}(j-i)\prod_{\substack{1\le i<j\le [\frac{d}{2}]}}(d-j-i)\prod_{ \substack{1\le i\le [\frac{d}{2}]}}([\frac{d}{2}]+\frac{1}{2}-i)^{\varepsilon},
\end{align*}
whit $\varepsilon$ equal to $1$ when $d\notin2\N$ and $0$ otherwise.
\end{defn}
 The next proposition is a consequence of Propositions 4.8 and 5.1 of \cite{Defosseux}.
\begin{prop}\label{eigenvalues} Let  $(M(n),n\ge 0)$, be a random process  on $i\mathcal{A}_{d}$ defined by $$M(n)=\sum_{l=1}^{n}Y_l \begin{pmatrix} 0 &  i  \\
-i &  0       
\end{pmatrix}Y_l^*,$$ where the $Y_l$'s are independent standard Gaussian variables on $\mathcal{M}_{d,2}$. If $\Lambda(n)$ is the vector of $\bar{{\mathcal{C}}}_d$ whose components are the  $[\frac{d}{2}]$ largest eigenvalues of $M(n)$, $n\in \N$,  then the random process $(\Lambda(n),n\ge 0)$ is a Markov chain on $\bar{{\mathcal{C}}}_d$ with transition probabilities 
 \begin{align*}
p_d(x,dy)= \frac{h_d(y)}{h_d(x)} m_d(x,y)\,dy,\end{align*}
for $x,y\in \mathcal{C}_d$, where   $dy$ is the Lebesgue measure on $\R_+^{[\frac{d}{2}]}$ and 
$$m_d(x,y)=\int_{\mathbb{R}_+^{r}} 1_{\{z\preceq x,y\}}e^{-\sum_{i=1}^{m}(y_i+x_i-2z_i)}\,dz$$
when $d=2r+1$ and 
$$m_d(x,y)=\int_{\mathbb{R}_{+}^{r-1}} 1_{\{z\preceq \vert x\vert,\vert y\vert \} } e^{-\sum_{i=1}^{r-1}(x_i+y_i-2z_i)}(e^{-\vert x_r-y_{r}\vert  }+e^{-(x_r+y_r)})\, dz$$
when $d=2r$. 
 \end{prop}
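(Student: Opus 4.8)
\emph{Overview.} The plan is to deduce the statement from Propositions~4.8 and~5.1 of \cite{Defosseux} after two short reductions: that $(\Lambda(n))_{n\ge 0}$ is a Markov chain at all, and that the increment $Y_lJY_l^{*}$ is exactly the ``elementary'' matrix whose continuous Pieri rule is encoded by $m_d$. Write $J=\begin{pmatrix}0&i\\-i&0\end{pmatrix}$, so that $M(n+1)=M(n)+Y_{n+1}JY_{n+1}^{*}$ with $Y_{n+1}$ a standard Gaussian on $\mathcal M_{d,2}$ independent of $M(n)$. For every $O\in O(d)$ the matrix $OY_{n+1}$ has the same law as $Y_{n+1}$ (the density is invariant under $M\mapsto OM$), and $O(i\mathcal A_d)O^{*}=i\mathcal A_d$; hence the conditional law of $M(n+1)$ given $M(n)=A$ is $O(d)$-equivariant, so it depends on $A$ only through the ordered eigenvalues of $A$ and is transported equivariantly under conjugation. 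Therefore the conditional law of the ordered spectrum of $M(n+1)$ given $M(n)$ is a function of the ordered spectrum of $M(n)$ alone; since antisymmetry forces $\lambda_{d-i+1}=-\lambda_i$ (and $\lambda_{[\frac d2]+1}=0$ for odd $d$), the $[\frac d2]$ largest eigenvalues determine the whole spectrum, and $(\Lambda(n))_{n\ge 0}$ is Markov. It remains to identify its one-step kernel, namely the law of the $[\frac d2]$ largest eigenvalues of $A+YJY^{*}$ when $A\in i\mathcal A_d$ has largest eigenvalues $x\in\mathcal C_d$ and $Y$ is standard Gaussian on $\mathcal M_{d,2}$.

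\emph{The elementary increment.} If $u,v$ denote the two columns of $Y$ (independent $N(0,I_d)$ vectors), then $YJY^{*}=i(uv^{*}-vu^{*})$, a rank-two element of $i\mathcal A_d$ with spectrum $(a,0,\dots,0,-a)$ where $a=(\|u\|^{2}\|v\|^{2}-\langle u,v\rangle^{2})^{1/2}$. Thus $M(n)$ is a sum of $n$ independent copies of precisely the random-matrix counterpart of the tensor factor $V_{\gamma_m}$ of Section~\ref{representation}. Proposition~4.8 of \cite{Defosseux} then describes the eigenvalues of $A+YJY^{*}$: they interlace with those of $A$ through the $B_r$/$D_r$ pattern relations of Section~\ref{definitions}, and conditionally on the interlacing vector $z$ they carry the exponential density appearing in the definition of $m_d$; in the even case the two possible signs of the ``wall'' coordinate of the underlying weight produce the two terms $e^{-\vert x_r-y_r\vert}+e^{-(x_r+y_r)}$, exactly as the kernel $R$ of Section~\ref{modeldescription} carries $q^{\vert x-y\vert}+q^{x+y}$. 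Integrating out $z$ shows that the conditional density (in $y$) of $\Lambda(n+1)$ given $\Lambda(n)=x$ is proportional to $V_d(y)\,m_d(x,y)$ on $\mathcal C_d$ --- the continuous analogue of the Pieri formula (\ref{MultB}), with $V_d$ playing the role of the dimension $s_{d-1}$.

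\emph{Normalization.} Since the function $c_d$ of Definition~\ref{asymptoticdim} does not depend on its argument, $h_d=c_d^{-1}V_d$ and hence $h_d(y)/h_d(x)=V_d(y)/V_d(x)$; so the asserted kernel reads $p_d(x,dy)=\frac{V_d(y)}{V_d(x)}m_d(x,y)\,dy$. Comparing with the previous paragraph, it only remains to check that this is a genuine probability kernel, i.e.\ that $\int_{\mathcal C_d}V_d(y)\,m_d(x,y)\,dy=V_d(x)$ for every $x\in\mathcal C_d$, equivalently that $V_d$ (hence $h_d$) is $m_d$-harmonic. This is the content of Proposition~5.1 of \cite{Defosseux}, obtained there as the scaling limit of the harmonicity of $s_{d-1}$ for the discrete Pieri kernel (Weyl dimension formula); alternatively it can be read off directly from the previous paragraph by computing the total mass of the measure $V_d(y)\,m_d(x,y)\,dy$. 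Combining the three paragraphs yields the claimed transition kernel.

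\emph{Main obstacle.} The only substantial point is the middle step: pushing the joint law of $(\|u\|^{2},\|v\|^{2},\langle u,v\rangle)$ through the eigenvalue map of the rank-two perturbation $A+i(uv^{*}-vu^{*})$ and verifying that the resulting interlacing constraints and exponential weight are exactly those in $m_d$, including the folding at the wall (the sum of two exponentials) in the case $d=2r$. This is precisely what Proposition~4.8 of \cite{Defosseux} supplies, so once that result is invoked the remaining steps are the two routine reductions above.
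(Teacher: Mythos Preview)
Your proposal is correct and follows essentially the same route as the paper: the paper does not give an independent proof of this proposition but simply records that it ``is a consequence of Propositions~4.8 and~5.1 of \cite{Defosseux}.'' Your write-up is a fleshed-out version of that citation, supplying the $O(d)$-equivariance argument for Markovianity and spelling out how the rank-two increment $YJY^{*}$ feeds into those two results; this is helpful detail, but not a different approach.
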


\section{Results}\label{results}
The main result of our paper states in particular that if only  one row of the patterns $(X(t),t\ge 0)$ is considered by itself, it found to be a Markov process too. Actually we state the result for the process observed at integer times, even if the process observed at the whole time is also Markovian as we'll see in section \ref{proofs}.
\begin{theo}\label{maintheo}
The random process $(X^{k}(n))_{n\ge 0}$ is a Markov process on $\mathcal{W}^+_{k+1}$. If we denote by $R_k$ its transition kernel then
\begin{itemize} 
\item $R_1=R$. 
\item when $k$ is even  $R_k=P_{k+1}$,
\item when $k$ is an odd integer greater  than 2
\[
R_k(x,y)=\left\{
    \begin{array}{ll}
       P_{k+1}(x,y)+  P_{k+1}(x,\tilde{y}) & \mbox{if } y_{\frac{k+1}{2}}\ne 0 \\
        \\
       P_{k+1}(x,y) & \mbox{otherwise,}
    \end{array}
\right.
\]
\end{itemize}
for $x,y\in \mathcal{W}^+_{k+1}$, where $\tilde{y}=(y_1,\dots,y_{\frac{k-1}{2}},-y_{\frac{k+1}{2}})$.
\end{theo}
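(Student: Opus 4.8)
The plan is to decouple the dynamics row by row and to show that the induced evolution on the $k^{\textrm{th}}$ row matches the announced kernel. First I would observe that the update rules defining $(X(t))_{t\ge 0}$ are \emph{triangular}: the positions $X^{l}(n+1/2)$ and $X^{l}(n+1)$ depend only on rows $1,\dots,l$ at the previous time. Hence for each $k$ the truncated process $(X^{1}(t),\dots,X^{k}(t))_{t\ge 0}$ is itself a Markov process on $GT_{k}$, and it suffices to understand how the last row is driven by the one above it. The core computation is a one-step identity: conditionally on $X^{k-1}(n)=a$, $X^{k-1}(n+1/2)=a'$, $X^{k}(n)=x$ and the intermediate pushed positions, the law of $X^{k}(n+1)$ obtained by the left-sweep at time $n+1/2$ followed by the right-sweep at time $n+1$ should be computed explicitly in terms of geometric variables and, for the wall particle when $k$ is odd, the kernel $R$. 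The key observation is that composing a maximum-type (blocking/pushing) move with a geometric jump leftwards and then a geometric jump rightwards, intertwined with the neighbouring row, produces exactly the combinatorial data $(c,s)$ (resp. $c\in\N^{r-1}$) appearing in the Pieri rule \eqref{MultB}: the variable $c_i$ records the leftmost intermediate position reached between $X^{k}_i$ and $X^{k-1}_i$, and $s$ records whether the wall particle crossed $0$.

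Next I would run the standard intertwining / Markov-function argument. Using the explicit description of $P_d$ in Proposition \ref{explicitPd} and of $\nu$ in Lemma \ref{nu}, one checks the intertwining relation $\Lambda_{k-1,k}\, Q_k = P_{k+1}\,\Lambda_{k-1,k}$ (or its $R_k$ variant in the odd case), where $Q_k$ is the one-step kernel of the truncated process on $GT_k$ and $\Lambda_{k-1,k}$ is the Markov kernel that, given the last row, spreads the pattern uniformly over $GT_{k}(\lambda)$; the counting function $s_k$ is precisely the normalising constant, which is why the dimension ratios $s_{d-1}(\beta)/s_{d-1}(\lambda)$ show up. Granting this intertwining together with the fact (to be checked at $n=0$, where everything sits at $0$) that the initial law of the truncated pattern is $\Lambda$-distributed given its last row, a Markov-function theorem (Rogers--Pitman type) yields that $(X^{k}(n))_{n\ge 0}$ is Markov with kernel $P_{k+1}$ when $k$ is even. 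For $k$ odd one has to be careful that the representation-theoretic index set $\mathcal{W}_{k+1}$ allows a negative last coordinate $\lambda_r<0$, whereas the particle positions $X^{k}_{(k+1)/2}$ are nonnegative by construction; folding the two highest weights $y$ and $\tilde y=(y_1,\dots,y_{(k-1)/2},-y_{(k+1)/2})$ onto the single nonnegative configuration $y$ gives the stated formula $R_k(x,y)=P_{k+1}(x,y)+P_{k+1}(x,\tilde y)$ when $y_{(k+1)/2}\ne 0$, the two terms coinciding and adding when $y_{(k+1)/2}=0$ is avoided. The base case $R_1=R$ is immediate from the definition of the time-$n\to n+1$ move of the single particle $X^1_1$, which is by construction distributed as $|x+\xi^1_1(n+1)-\xi^1_1(n+1/2)|$, i.e.\ according to $R(x,\cdot)$.

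The main obstacle I expect is the one-step identity in the odd case: carefully tracking the wall particle $X^{l}_{(l+1)/2}$, which both reflects at $0$ and interacts (via the convention $X^{l-1}_{(l+1)/2}(n+1)=0$) with the row above, and verifying that the resulting law of $X^{k}_{(k+1)/2}(n+1)$ combined with the $c$-sum reproduces exactly the factor $(e^{-|x_r-y_r|}+e^{-(x_r+y_r)})$-analogue $\frac{1-q}{1+q}(q^{|x-y|}+q^{x+y})$ hidden in $R$ and in the $d=2r$ branch of $P_d$. Concretely one must show that the ``absolute value'' appearing in the definition of $R$ and in $GT_k$ accounts precisely for the $s\in\{0,1\}$ alternative (odd $d$) or the $|\lambda_r-\beta_r|$ term (even $d$) in the Pieri rule, and that no spurious multiplicity is created by the reflection. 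Everything else — triangularity, the intertwining bookkeeping, and the Markov-function conclusion — is routine once Proposition \ref{explicitPd} is in hand.
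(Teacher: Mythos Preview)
Your high-level strategy --- intertwining plus the Rogers--Pitman criterion, fed by Proposition~\ref{explicitPd} --- is exactly what the paper does. However, the paper's implementation differs from your plan in one structural respect that is worth knowing about, because it is what makes the computation tractable.

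You propose to intertwine the full $GT_k$-valued process with its last row via a kernel $\Lambda$ that spreads the pattern over $GT_k(\lambda)$. The paper instead \emph{enlarges the state} and works row by row. Concretely, it tracks the pair $(Z^k(n),Y^k(n))=(X^k(n-\tfrac12)_{\textrm{truncated}},\,X^k(n))$, defines an auxiliary kernel $S_k$ on this enlarged space (identities \eqref{Skeven}--\eqref{Skodd}), and proves by induction on $k$ that $(Z^k,Y^k)$ is Markov with kernel $S_k$. The inductive step only requires intertwining the \emph{three-row} process $(Y^{k-1},Z^k,Y^k)$ with $(Z^k,Y^k)$ via a kernel $L_k$ that distributes the $(k-1)^{\textrm{th}}$ row given the $k^{\textrm{th}}$; this is Proposition~\ref{intertwining}. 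The point is that keeping the half-integer position $Z^k$ in the state decouples the left-sweep from the right-sweep and turns the intertwining identity into a product of one-dimensional sums, each handled by one of four elementary identities (Lemma~\ref{desintegration}). Only at the very end does one marginalise over $z'$ to recover $P_{k+1}$ from $S_k$.

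Your all-at-once intertwining $\Lambda Q_k=P_{k+1}\Lambda$ is morally equivalent, but verifying it directly would mean summing over the entire pattern and both half-steps simultaneously; the paper's row-by-row device with the half-time coordinate is precisely what avoids that. Two small corrections: the kernel $L_k$ is not literally uniform on $GT_k(\lambda)$ --- in the even case it carries the weight $1_{\{0\}}(x_{k/2})+2\cdot 1_{\N^*}(x_{k/2})$ accounting for the sign of the last odd-row entry in Definition~\ref{defnGT}; and your reading of the odd-$k$ folding is correct, but note that the reflection is already absorbed into the kernel $R$ appearing in $S_k$ (identity \eqref{Skodd}), so no separate ``$s\in\{0,1\}$'' bookkeeping is needed at that stage.
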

\begin{cor} \label{theoY}
Let  $(e_1,\dots,e_{[\frac{k+1}{2}]})$ be  the  canonical basis of $\R^{[\frac{k+1}{2}]}$. The random process $(Y^k(t),t\ge 0)$ is a Markov process with infinitesimal generator defined by
$$A_k(\lambda,\beta)=\left\{
    \begin{array}{ll}
       2 \frac{s_{k}(\beta)}{s_k(\lambda)}\,1_{\beta\in \mathcal{W}_k} & \mbox{if $k$ is odd, } \lambda_{\frac{k+1}{2}}= 0  \mbox{ and $\beta_{\frac{k+1}{2}}=1$} 
        \\
        \\
     \,\,\, \frac{s_{k}(\beta)}{s_k(\lambda)}\, 1_{\beta\in \mathcal{W}_k} & \mbox{otherwise,}
    \end{array}
\right. 
$$ 
for $\lambda\in \mathcal{W}_k,$ and $\beta\in \{\lambda +e_1,\dots, \lambda +e_{[\frac{k+1}{2}]}, \lambda-e_1,\dots, \lambda-e_{[\frac{k+1}{2}]}\}$. 
\end{cor}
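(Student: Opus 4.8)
We sketch the argument; the point is that the statement is a degeneration of Theorem~\ref{maintheo} as $q\to 0$.

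\emph{Step 1: identify $(Y(t))_{t\ge0}$ as the $q\to0$ limit of $(X(n))_{n\ge0}$ with time sped up by $1/q$.} Realise the process $X=X^{(q)}$ of section~\ref{modeldescription} from its geometric jump variables, and $Y$ of section~\ref{Poisson} from its family of independent rate‑one Poisson clocks. In one discrete step a non‑wall particle moves left (resp.\ right) by a nonzero amount with probability $\P(\xi\ge1)=q$, and by an amount $\ge2$ with probability $o(q)$; the wall particles evolve by the kernel $R(x,\cdot)=\mathrm{law}\,\lvert x+\xi^1_1(1)-\xi^1_1(\tfrac12)\rvert$, for which $R(x,x\pm1)=q+o(q)$ when $x\ge1$ while $R(0,1)=\tfrac{2q(1-q)}{1+q}=2q+o(q)$ (and $R(0,y)=o(q)$ for $y\ge2$). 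Thus, after the time change $n\mapsto\lfloor t/q\rfloor$, each non‑wall particle acquires a rate‑one left clock and a rate‑one right clock, and each wall particle a rate‑one right clock together with a rate‑one clock that, when the particle sits at $0$, is reflected into a right move — exactly the dynamics and the reflection rule of section~\ref{Poisson}, the blocking/pushing prescriptions of the two models being literally the same. Since the system is finite and, in the limit, the clocks ring at distinct times almost surely, the coupled configuration is a continuous functional of the clock data, and one obtains $X^{(q)}(\lfloor t/q\rfloor)\Rightarrow Y(t)$ in the Skorokhod topology, and likewise row by row.

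\emph{Step 2: transport the Markov property and identify the generator.} By Theorem~\ref{maintheo}, $(X^{k,(q)}(n))_n$ is a Markov chain with kernel $R_k^{(q)}$ (namely $R$, or $P_{k+1}$, or the reflected combination of $P_{k+1}$'s). By Step~1 its limit $(Y^k(t))_t$ is Markov, and by the standard approximation theorem for Markov semigroups its generator is $A_k=\lim_{q\to0}\tfrac1q\bigl(R_k^{(q)}-\mathrm{Id}\bigr)$ on $\mathcal W^+_{k+1}$. It remains to compute this limit from Proposition~\ref{explicitPd}. For $k$ even, $R_k^{(q)}=P_{k+1}$ with $d=k+1=2r+1$; the exponent $\sum_i(\lambda_i+\beta_i-2c_i)$ over $c\preceq\lambda,\beta$ is minimised at the unique admissible point $c_i=\min(\lambda_i,\beta_i)$, where it equals $\sum_i\lvert\lambda_i-\beta_i\rvert$. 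Hence $R_k^{(q)}(\lambda,\beta)=O(q^2)$ whenever $\sum_i\lvert\lambda_i-\beta_i\rvert\ge2$, while for $\beta=\lambda\pm e_j\in\mathcal W_{k+1}$ the boundary factor $1_{c_r>0}+\tfrac{1_{c_r=0}}{1+q}$ tends to $1$ and $R_k^{(q)}(\lambda,\beta)=q\,\tfrac{s_k(\beta)}{s_k(\lambda)}+o(q)$ — the wall coordinate playing no distinguished role, in agreement with $A_k$ having no special case for $k$ even. For $k\ge3$ odd, $R_k^{(q)}(x,y)=P_{k+1}(x,y)+P_{k+1}(x,\tilde y)\,1_{y_{(k+1)/2}\neq0}$ with $d=k+1=2r$; the same analysis of the even‑$d$ formula shows $P_{k+1}(x,\tilde y)=O(q^2)$ except precisely when $x_{(k+1)/2}=0$ and $y=x+e_r$, in which case $\lvert\tilde y\rvert=\lvert y\rvert$, $s_k(\tilde y)=s_k(y)$ and $\lvert x_r-\tilde y_r\rvert=\lvert x_r-y_r\rvert$, so that $P_{k+1}(x,\tilde y)=P_{k+1}(x,y)$ term by term: this doubling is the origin of the factor $2$ in $A_k$. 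All remaining off‑diagonal pairs again give $q\,\tfrac{s_k(\beta)}{s_k(\lambda)}+o(q)$; the case $k=1$ is the direct computation of $\tfrac1q(R-\mathrm{Id})$ already carried out above; and the diagonal of $A_k$ is forced by $\sum_\beta A_k(\lambda,\beta)=0$.

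\emph{Main obstacle.} The genuine work is Step~1: upgrading the informal "$q\to0$'' degeneration to an honest convergence of the interacting particle systems — in particular checking that, run at speed $1/q$, no jump mass escapes to size $\ge2$, that the wall kernel $R$ really degenerates to reflection at rate~$2$, and that blocking/pushing commute with this limit — which is what licenses transporting the Markov property from $X^{k,(q)}$ to $Y^k$. Once that is in place, everything else is a finite computation with Proposition~\ref{explicitPd}; the only mildly delicate point there is verifying that $c_i=\min(\lvert\lambda\rvert_i,\lvert\beta\rvert_i)$ is admissible for the interlacing constraints (including the relation at the wall) whenever $\beta\in\mathcal W_{k+1}$, which is exactly where the admissibility condition on $\beta$ in the statement of $A_k$ enters. (Alternatively one could prove the corollary directly, by running the intertwining/Markov‑function argument behind Theorem~\ref{maintheo} in continuous time; the degeneration route above is shorter.)
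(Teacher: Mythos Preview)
Your approach is essentially the paper's own: prove convergence of the full pattern process $X^{(q)}(\lfloor t/q\rfloor)\Rightarrow Y(t)$ as $q\to0$, then combine with Theorem~\ref{maintheo} and Proposition~\ref{explicitPd} to read off the generator $A_k$ of the limiting row process. Your Step~2 computation, including the identification of the doubling term from $P_{k+1}(x,\tilde y)$ precisely when $x_r=0$ and $y=x+e_r$, is more explicit than what the paper writes but matches it.

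The only genuine methodological difference is in how Step~1 is carried out. You argue probabilistically (coupling geometric jumps to Poisson clocks, Skorokhod continuity of the update map). The paper instead observes that the half-step transition kernels $T_1(q)(x,y)=\P(X(n+\tfrac12)=y\mid X(n)=x)$ and $T_2(q)(x,y)=\P(X(n+1)=y\mid X(n+\tfrac12)=x)$ are, respectively, lower- and upper-triangular in a suitable ordering of $GT_k$, with $T_i(0)=I$; a short matrix lemma (Lemma~\ref{T1T2}, itself reduced to the finite-size case as in \cite{BorodinFerrari}) then gives $(T_1(q)T_2(q))^{[t/q]}\to\exp\bigl(t(A_1+B_1)\bigr)$. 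This sidesteps entirely the soft-analysis issues you flag under ``Main obstacle'' (escape of jump mass, commutation of blocking/pushing with the limit): triangularity makes each matrix coefficient a finite polynomial in $q$, so the limit is purely algebraic. Your route is more probabilistic in flavour and perhaps more transparent about why the limit is the model of section~\ref{Poisson}; the paper's route is shorter and avoids any tightness or continuity argument.
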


If $(\Lambda(n),n\ge 0)$ is the process of eigenvalues considered in Proposition \ref{eigenvalues} with $d=k+1$ then the following theorem holds.
\begin{theo} \label{theoMat} Letting $q=1-\frac{1}{N}$, the process $(\frac{X^k(n)}{N}, n\ge 1)$ converges in distribution towards the process of eigenvalues $(\Lambda(n), n\ge 1)$ as $N$ goes to infinity.  
\end{theo}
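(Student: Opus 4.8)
The plan is to deduce the matrix convergence from Theorem \ref{maintheo} together with the explicit description of the transition kernels in Proposition \ref{explicitPd}, by showing that after rescaling by $N$ the discrete kernel $R_k$ converges to the continuous kernel $p_d$ of Proposition \ref{eigenvalues} with $d=k+1$. Since $X^k(0)=0$ deterministically, it suffices to prove convergence of the one-step transition kernels: if $\frac{X^k(n)}{N}\to x$ then the law of $\frac{X^k(n+1)}{N}$ given $\frac{X^k(n)}{N}$ converges weakly to $p_d(x,dy)$, and then convergence of the whole (time-homogeneous) Markov chain follows by a standard induction on $n$ using the Feller-type continuity of the limiting kernel together with boundedness of the relevant dimension ratios on compacts.

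First I would record that under $q=1-\frac1N$ the prefactor $(1-q)^{d-1}=N^{-(d-1)}$ is exactly the normalization needed to turn the sum over $c\in\N^r$ (respectively $c\in\N^{r-1}$) appearing in Proposition \ref{explicitPd} into a Riemann sum: writing $x=X^k(n)/N$, $y=\beta/N$, $z=c/N$, the lattice $\frac1N\N^r$ has mesh $N^{-r}$, and since $d-1$ equals $2r$ when $d=2r+1$ and $2r-1$ when $d=2r$, the power of $N$ splits as the mesh factor $N^{-r}$ (resp.\ $N^{-(r-1)}$) times the Jacobian-type factor coming from $s_{d-1}(\gamma_m)$, i.e.\ the $m$-dependence; more precisely $q^{\sum_i(\lambda_i+\beta_i-2c_i)}=(1-\frac1N)^{N\sum_i(x_i+y_i-2z_i)}\to e^{-\sum_i(x_i+y_i-2z_i)}$ and $q^{|\lambda_r-\beta_r|}\to e^{-|x_r-y_r|}$ uniformly on compacts. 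The term $1_{c_r>0}+\frac{1_{c_r=0}}{1+q}$ in the odd case tends to $1$ off the boundary $z_r=0$, which has Lebesgue measure zero and hence does not affect the integral; in the even case the factor $\frac{1}{1+q}\to\frac12$ pairs with the identity $e^{-|x_r-y_r|}+e^{-(x_r+y_r)}$ appearing in $m_d$ after one checks that the ``$\tilde y$'' contribution in the odd-$k$ case of Theorem \ref{maintheo} is precisely what symmetrizes $y_{(k+1)/2}\mapsto -y_{(k+1)/2}$, reproducing the two exponential terms in $m_d$ for $d=2r$. In short, each summand in $N^{d-1}R_k$, divided by the appropriate ratio, converges pointwise to the integrand of $m_d$, and the sum converges to $m_d(x,y)$.

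Next I would handle the ratio $\frac{s_{d-1}(\beta)}{s_{d-1}(\lambda)}$: by Definition \ref{defnGT} and the Weyl dimension formula, $s_{d-1}$ is a polynomial in $\lambda$ of degree $\dim\mathcal C_d$ with leading behaviour governed by $V_d$, so $s_{d-1}(N\cdot)\sim N^{\deg}V_d(\cdot)/c_d(\cdot)$ and therefore $\frac{s_{d-1}(\beta)}{s_{d-1}(\lambda)}\to \frac{h_d(y)}{h_d(x)}$ for $x,y\in\mathcal C_d$, with the degree-in-$N$ cancelling exactly (this is the content of Definition \ref{asymptoticdim} and is essentially Proposition 5.1 of \cite{Defosseux}). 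Combining the three ingredients — the prefactor/mesh matching, the exponential limits of the $q$-powers, and $\frac{s_{d-1}(\beta)}{s_{d-1}(\lambda)}\to \frac{h_d(y)}{h_d(x)}$ — gives $N^{d-1}R_k(\lfloor Nx\rfloor,\lfloor Ny\rfloor)\to \frac{h_d(y)}{h_d(x)}m_d(x,y)=p_d(x,dy)/dy$ uniformly for $x,y$ in compact subsets of $\mathcal C_d$, which upgrades to weak convergence of the transition kernels once one controls the contribution near the boundary $\partial\mathcal C_d$ and the tails $|y|\to\infty$ (the latter by the exponential decay of $q^{\sum\beta_i}$, uniformly in $N$).

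The main obstacle I anticipate is not the interior Riemann-sum asymptotics — those are routine once the bookkeeping of powers of $N$ is set up — but rather two points requiring care: (i) justifying the passage from pointwise/local-uniform convergence of densities to genuine weak convergence of the Markov chain trajectories, which needs a tightness or uniform-integrability argument to rule out mass escaping to the boundary $\{x_i=x_{i+1}\}\cup\{x_{[d/2]}=0\}$ or to infinity — here the vanishing of $h_d$ on the walls and the geometric tails should suffice, but must be quoted carefully; and (ii) matching the combinatorics of the odd-$k$ case of Theorem \ref{maintheo}, where $R_k(x,y)=P_{k+1}(x,y)+P_{k+1}(x,\tilde y)$, with the reflected exponential $e^{-|x_r-y_r|}+e^{-(x_r+y_r)}$ in $m_d$ for $d=2r$ — i.e.\ checking that $P_{k+1}(x,\tilde y)$ contributes the $e^{-(x_r+y_r)}$ term in the limit, with the $\frac1{1+q}$ normalizations reconciling on both sides. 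Modulo these, the theorem reduces to Theorem \ref{maintheo}, Proposition \ref{explicitPd}, Proposition \ref{eigenvalues}, and the dimension asymptotics of Definition \ref{asymptoticdim}.
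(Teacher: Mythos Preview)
Your approach is exactly the one the paper takes, only the paper compresses it into two sentences: it fixes a sequence $x_N/N\to x\in\mathcal C_{k+1}$, sets $\nu_N=\sum_y R_k(x_N,y)\,\delta_{y/N}$, and asserts that Propositions~\ref{explicitPd} and~\ref{eigenvalues} give $\nu_N\to p_{k+1}(x,\cdot)$, from which the theorem follows. All the Riemann-sum mechanics you spell out, and the tightness/boundary issues you (rightly) flag as needing care, are swept under that single assertion.

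One bookkeeping slip worth fixing: the normalization you claim, namely $N^{d-1}R_k(\lfloor Nx\rfloor,\lfloor Ny\rfloor)\to p_d(x,dy)/dy$, carries the wrong power of $N$. Since $R_k(\lambda,\cdot)$ is a probability on an $[\tfrac d2]$-dimensional lattice, the correct density scaling is $N^{[d/2]}$, not $N^{d-1}$. Concretely, for $d=2r+1$ the factor $(1-q)^{d-1}=N^{-2r}$ splits as $N^{-r}$ for the Riemann sum over $c\in\N^r$ and a further $N^{-r}$ matching the Lebesgue density in the target variable $y\in\R^r$; the same accounting works for $d=2r$. There is no residual ``Jacobian-type factor coming from $s_{d-1}(\gamma_m)$'' --- that quantity has already been cancelled in passing from $\mu_m$ to the closed form of Proposition~\ref{explicitPd}. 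With that exponent corrected, the rest of your outline goes through and simply makes explicit what the paper leaves to the reader.
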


\section{proofs}\label{proofs}
\subsection{Proof of Theorem \ref{maintheo}} For $k=1$, Theorem \ref{maintheo} is clearly true. The proof of the  theorem for $k\ge 2$ rests on an intertwining property and an application of a Pitman and Rogers  criterion given in \cite{PitmanRogers}.
\begin{Notation} Let $\xi_1$ and $\xi_2$ be two independent geometric random variables. For $x,a\in \N$ such that $x\ge a$, the law of the random variable
$$\max(a,x-\xi_1),$$
is denoted by $\overset{a\leftarrow}{P}(x,.)$.
For $x,b\in \N$ such that $x\le b$ we denote by  $\overset{\rightarrow b}{P}(x,.)$ and $\overset{\rightarrow b}{R}(x,.)$ the laws of the random variables
$$\min(b,x+\xi_1)\,\textrm{ and } \,\min(b,\vert x+\xi_1-\xi_2\vert).$$
For $x,y\in \R^2$ such that $x\le y$ we let 
$$P(x,y)=(1-q)q^{y-x}.$$
\end{Notation}
The two following lemmas are proved by straightforward computations.
\begin{lem}For $a,x,y\in \N$ such that $a\le y\le x$
\begin{align*}
\overset{a\leftarrow}{P}(x,y)&=\left\{
    \begin{array}{ll}
        (1-q)q^{x-y} & \mbox{if } a+1\le y \\
        q^{x-a} & \mbox{if } y=a.
    \end{array}
\right. 
\end{align*} 
For $b,x,y\in \N$ such that $b\ge y\ge x$
\begin{align*}
\overset{\rightarrow b}{P}(x,y)&=\left\{
    \begin{array}{ll}
        (1-q)q^{y-x} & \mbox{if } y\le b-1 \\
        q^{b-x} & \mbox{if } y=b.
    \end{array}
\right. 
\end{align*}
For $b,x,y\in \N$ such that $b\ge y, x$
\begin{align*}
\overset{\rightarrow b}{R}(x,y)&=\left\{
    \begin{array}{ll}
      \frac{1-q}{1+q}(q^{\vert y-x\vert }+q^{x+y}) & \mbox{if }  y\le b-1, y> 0 \\
      \\
       \frac{1-q}{1+q}q^{x }& \mbox{if }  y\le b-1, y= 0   \\
       \\
          \frac{1}{1+q}q^b(q^{-x }+q^{x}) & \mbox{if }  y=b, y> 0 \\
          \\
         1 & \mbox{if }  y=b, y=0.
    \end{array}
\right.
\end{align*}
\end{lem}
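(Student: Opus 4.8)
The plan is to prove the three cases of the lemma by direct computation. Recall that the computations concern the laws of $\max(a,x-\xi_1)$, $\min(b,x+\xi_1)$ and $\min(b,\lvert x+\xi_1-\xi_2\rvert)$ where $\xi_1,\xi_2$ are independent geometric random variables with parameter $q$, i.e.\ $\P(\xi_i=n)=q^n(1-q)$ for $n\in\N$.

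First I would treat $\overset{a\leftarrow}{P}(x,\cdot)$. For $a+1\le y\le x$ the event $\{\max(a,x-\xi_1)=y\}$ coincides with $\{\xi_1=x-y\}$, which has probability $q^{x-y}(1-q)$. For $y=a$ it coincides with $\{\xi_1\ge x-a\}$, which has probability $\sum_{n\ge x-a}q^n(1-q)=q^{x-a}$. This gives the first display. The computation for $\overset{\rightarrow b}{P}(x,\cdot)$ is the mirror image: for $x\le y\le b-1$ the event $\{\min(b,x+\xi_1)=y\}$ is $\{\xi_1=y-x\}$ with probability $q^{y-x}(1-q)$, and for $y=b$ it is $\{\xi_1\ge b-x\}$ with probability $q^{b-x}$.

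The substantive part is $\overset{\rightarrow b}{R}(x,\cdot)$, the law of $\min(b,\lvert x+\xi_1-\xi_2\rvert)$. Here I would first record the law of $\lvert x+\xi_1-\xi_2\rvert$ on $\N$, then truncate at $b$. For this it is convenient to compute $\P(x+\xi_1-\xi_2=k)$ for $k\in\Z$: writing $\xi_1-\xi_2$ and summing the geometric series, one gets $\P(\xi_1-\xi_2=j)=\frac{1-q}{1+q}q^{\lvert j\rvert}$ for $j\in\Z$ (this is exactly the displayed formula for $\P(\lvert\xi_1-\xi_2\rvert=k)$ used in the proof of Lemma \ref{nu}, up to folding). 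Hence for $y>0$ and $y<b$,
\[
\P(\lvert x+\xi_1-\xi_2\rvert=y)=\P(x+\xi_1-\xi_2=y)+\P(x+\xi_1-\xi_2=-y)=\tfrac{1-q}{1+q}\bigl(q^{\lvert y-x\rvert}+q^{x+y}\bigr),
\]
and for $y=0<b$ the two contributions coincide and give $\tfrac{1-q}{1+q}q^x$ (with $\lvert 0-x\rvert=x$ and $x+0=x$ collapsing to a single term). For $y=b$ one sums the tail: $\P(\lvert x+\xi_1-\xi_2\rvert\ge b)$. When $b>0$ this tail equals $\sum_{y\ge b}\tfrac{1-q}{1+q}(q^{\lvert y-x\rvert}+q^{x+y})$, and a short geometric-series computation collapses it to $\tfrac{1}{1+q}q^b(q^{-x}+q^x)$; when $b=0$ we have $\lvert x+\xi_1-\xi_2\rvert\ge 0$ always, so the probability is $1$. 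This yields the four cases of the third display.

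The only mild subtlety — and the main place to be careful — is the bookkeeping at the boundary values $y=0$ and $y=b$, where the two "reflected" contributions from $+y$ and $-y$ either coincide (at $y=0$) or get absorbed into a tail sum (at $y=b$), and where one must check the geometric tail sums telescope to the stated closed forms (in particular that $\sum_{y\ge b}(q^{y-x}+q^{x+y})=\frac{q^{b-x}+q^{b+x}}{1-q}$ when $b\ge x$, with the analogous identity when $b<x$ using $\lvert y-x\rvert$). Everything else is routine summation of geometric series, so I would simply state "by straightforward computation" after indicating these reductions.
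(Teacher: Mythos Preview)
Your proposal is correct and matches the paper's approach exactly: the paper simply states that the lemma is ``proved by straightforward computations'' and gives no further details, and your write-up supplies precisely those computations. One small remark: your parenthetical about the case $b<x$ in the tail sum is unnecessary, since the hypothesis $b\ge x,y$ ensures $b\ge x$, so $\lvert m-x\rvert=m-x$ for all $m\ge b$.
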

 
\begin{lem}\label{desintegration}
For $(x,y,z)\in \N^3$ such that $0<z\le y$
\begin{align}\label{desintegration1}
\sum_{u=0}^{z}(1_{u=0}+2\,1_{u>0})R(u,x)\overset{u\leftarrow}{P}(y,z)=  (1-q)(1_{x=0}+2\,1_{x>0})q^{x\vee z+y-2z}.
\end{align}
For $(x,y,a)\in \N^3$ such that $a\le y$ and $y\le x$
\begin{align}\label{desintegration2}
\sum_{u=a}^{y}q^u\overset{u\leftarrow}{P}(x,y)= q^{x-y}q^a.
\end{align}
For $(x,y,a)\in \N^3$ such that $y\le a$ and $x\le y$
\begin{align}\label{desintegration3}
\sum_{v=y}^{a}q^{-v}\overset{\rightarrow v}{P}(x,y)= q^{y-x}q^{-a}.
\end{align}
For $y\in \N,y'\in \N^*$ such that $y'\le a$
\begin{align}\label{desintegration4}
\sum_{v=y'}^{a}q^{v\vee y-2v}\overset{\rightarrow v}{R}(y\wedge v,y')= \frac{1}{1-q}q^{-a}R(y,y').
\end{align}
\end{lem}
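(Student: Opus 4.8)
The plan is to prove all four identities by a single elementary scheme: substitute the closed forms for $\overset{a\leftarrow}{P}$, $\overset{\rightarrow b}{P}$ and $\overset{\rightarrow b}{R}$ given by the previous lemma (together with the definition of $R$), then sum the resulting finite geometric series. In every identity the summand in which $\overset{a\leftarrow}{P}$ (resp.\ $\overset{\rightarrow b}{P}$, $\overset{\rightarrow b}{R}$) takes its generic interior value runs over a geometric progression, and the point of the computation is that this progression sums to exactly the gap between the claimed right-hand side and the isolated boundary summand — the one in which the relevant maximum or minimum is attained at its truncation level — which then cancels. Thus the whole proof is bookkeeping of geometric series together with a small number of case distinctions.

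I would begin with $(\ref{desintegration2})$ and $(\ref{desintegration3})$, which are the simplest. For $(\ref{desintegration2})$, split off the term $u=y$, where $\overset{u\leftarrow}{P}(x,y)=q^{x-y}$, from the terms $a\le u\le y-1$, where $\overset{u\leftarrow}{P}(x,y)=(1-q)q^{x-y}$; then $(1-q)q^{x-y}\sum_{u=a}^{y-1}q^{u}=q^{x-y}(q^{a}-q^{y})$, and adding the boundary term $q^{x}$ yields $q^{x-y}q^{a}$. Identity $(\ref{desintegration3})$ is the mirror image, using $\sum_{v=y+1}^{a}q^{-v}=(q^{-a}-q^{-y})/(1-q)$.

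Next I would treat $(\ref{desintegration1})$. Using $\overset{u\leftarrow}{P}(y,z)=(1-q)q^{y-z}$ for $u\le z-1$ and $\overset{u\leftarrow}{P}(y,z)=q^{y-z}$ at $u=z$, the left-hand side rewrites as $q^{y-z}\big[(1-q)\sum_{u=0}^{z}(1_{u=0}+2\cdot1_{u>0})R(u,x)+q\,(1_{z=0}+2\cdot1_{z>0})R(z,x)\big]$. I would then distinguish the cases $x=0$, $1\le x\le z$, and $x>z$, insert the two-piece formula for $R(\,\cdot\,,x)$, and evaluate the geometric sums $\sum_{u}(1_{u=0}+2\cdot1_{u>0})q^{u}$ and $\sum_{u}(1_{u=0}+2\cdot1_{u>0})q^{|u-x|}$; the $(1+q)^{-1}$ prefactors coming from $R$ cancel, the spurious terms of the form $q^{x+z+1}$ produced by the interior sum are killed by the boundary contribution $q\,R(z,x)$, and what survives is precisely $(1-q)(1_{x=0}+2\cdot1_{x>0})q^{x\vee z+y-2z}$.

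Finally $(\ref{desintegration4})$, which I expect to be the main obstacle — not because any individual step is hard, but because it carries the most case analysis and one must keep track of which clause of the previous lemma governs $\overset{\rightarrow v}{R}(y\wedge v,y')$ as $v$ ranges over the sum. I would first dispose of the degenerate ranges $y<y'$ and $y>a$, in which either $v>y$ or $v<y$ for every $v$ in the summation. In the generic range $y'\le y\le a$ I would cut the sum at $v=y$, the value at which both the weight $q^{v\vee y-2v}$ and the argument $y\wedge v$ change form. For $v\ge y+1$ one has $y\wedge v=y$ and $\overset{\rightarrow v}{R}(y,y')=\frac{1-q}{1+q}(q^{|y-y'|}+q^{y+y'})$ independently of $v$, so those terms contribute $\frac{1-q}{1+q}(q^{|y-y'|}+q^{y+y'})\cdot\frac{q^{-a}-q^{-y}}{1-q}$. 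For $y'\le v\le y$ one has $y\wedge v=v$ (and $=y$ at $v=y$); I would isolate the boundary summand $v=y'$, where $\overset{\rightarrow v}{R}$ takes its value at $y'=v$, from the remaining geometric sum, and verify that this block equals $\frac{q^{-y}}{1+q}(q^{y-y'}+q^{y+y'})$. Adding the two blocks gives $\frac{q^{-a}}{1+q}(q^{|y-y'|}+q^{y+y'})$, which equals $\frac{1}{1-q}q^{-a}R(y,y')$ since $y'\ge1$. Once the case split for $\overset{\rightarrow v}{R}(y\wedge v,y')$ is laid out, each sub-identity reduces to a single geometric summation.
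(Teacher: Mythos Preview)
Your approach is correct and matches the paper's: the paper states only that the lemma is ``proved by straightforward computations'' and gives no further detail, so your proposal is precisely the kind of direct substitution-and-geometric-sum argument the authors have in mind. Your rewriting of the left-hand side of (\ref{desintegration1}) as $q^{y-z}\big[(1-q)\sum_{u=0}^{z}(\cdots)R(u,x)+q\,(1_{z=0}+2\cdot1_{z>0})R(z,x)\big]$ and your case split for (\ref{desintegration4}) at $v=y$ are the right organizing moves, and the checks go through as you describe.
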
  
\bigskip
 Let us first prove Theorem \ref{maintheo}  for $k = 2$. Consider the set 
$$\mathcal{W}^+_{2,3}=\{(z,y)\in \N^2: z\le y\}.$$ Define a Markov kernel $S_2$ on  $\mathcal{W}^+_{2,3}$ by letting  
$$S_2((z_0,y_0),(z,y))=\left\{
    \begin{array}{ll}
      (1-q)^2\frac{s_2(y)}{s_2(y_0)} q^{y_0+y-2z}1_{z\le y_0\wedge y}& \mbox{if }  z> 0 \\
      \\
      \frac{(1-q)^2}{1+q}\frac{s_2(y)}{s_2(y_0)}q^{y_0+y} & \mbox{if }  z= 0.
    \end{array}
\right.
$$
for $(z_0,y_0),(z,y)\in \mathcal{W}^+_{2,3}$ and another one $L_2$ from $\mathcal{W}^+_{2,3}$ to $\N\times \mathcal{W}^+_{2,3}$ by letting 
$$L_2((z_0,y_0),(x,z,y))=(1_{x=0}+2\,1_{x>0})\frac{1}{s_2(y)}1_{x\le y}1_{(z_0,y_0)=(z,y)},$$
for $(z_0,y_0),(z,y)\in \mathcal{W}^+_{2,3}$ and $x\in \N$.   The fact that $S_2$ is a Markov kernel follows from Proposition \ref{explicitPd} with $d=3$. The random process 
$$(X_1^1(n),X_1^2(n-\frac{1}{2}),X_1^2(n))_{n\ge 1},$$
is clearly Markovian. Let us denote by $Q_2$ its transition kernel. Then $Q_2$, $L_2$ and $S_2$ satisfy the following intertwining property.
 \begin{lem}\label{intertwining2}
$$L_2Q_2=S_2L_2.$$
 \end{lem}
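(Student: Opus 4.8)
The intertwining $L_2Q_2 = S_2L_2$ is an equality of kernels from $\mathcal{W}^+_{2,3}$ to $\N\times\mathcal{W}^+_{2,3}$, so I would evaluate both sides at a pair $((z_0,y_0),(x,z,y))$ and show they agree. The plan is to unwind $Q_2$ into the elementary one-step moves that define the process $(X_1^1,X_1^2(\cdot-\tfrac12),X_1^2)$, compose with $L_2$, and recognise the resulting sum as $S_2L_2$ via the disintegration identities of Lemma~\ref{desintegration}. Since $L_2((z_0,y_0),(x,z,y))$ forces $(z_0,y_0)=(z,y)$, the right-hand side is simply $\sum_{(z_0',y_0')}S_2((z_0,y_0),(z_0',y_0'))L_2((z_0',y_0'),(x,z,y)) = S_2((z_0,y_0),(z,y))\,(1_{x=0}+2\cdot 1_{x>0})\frac{1}{s_2(y)}1_{x\le y}$; so the content of the lemma is that the left-hand side $\sum_{(z_0',y_0')}L_2((z_0,y_0),(z_0',y_0'))Q_2((z_0',y_0'),(x,z,y))$ collapses to the same expression.

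\textbf{Key steps.} First I would write $Q_2$ explicitly from the model description: starting at state $(X_1^1=z_0,\ X_1^2=y_0)$ (the middle coordinate records $X_1^2(n-\tfrac12)$, which equals $z_0$ in a state of the reduced chain since after an integer step $X_1^2(n)=y_0$ but the bookkeeping will matter during the step), the half-integer move sets $X_1^2(n+\tfrac12)=\max(X_1^1(n), X_1^2(n)-\xi_1^2)$, i.e. an $\overset{z_0\leftarrow}{P}(y_0,\cdot)$ transition of $X_1^2$ while $X_1^1$ is frozen; then the integer move updates $X_1^1(n+1)$ by the kernel $R(X_1^1(n+\tfrac12),\cdot)$ and $X_1^2(n+1)$ by a geometric right-jump blocked only above by $+\infty$, i.e. $X_1^2(n+1)=X_1^2(n+\tfrac12)+\xi_1^2(n+1)$. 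So $Q_2((z_0,y_0),(x,z,y))$ factors as $\overset{z_0\leftarrow}{P}(y_0,z)\,R(z_0,x)\,P(z,y)$ (with the $P(z,y)=(1-q)q^{y-z}$ factor for the unconstrained right jump), and in the reduced chain one records $(X_1^1(n+1),X_1^2(n+\tfrac12),X_1^2(n+1))=(x,z,y)$. The left-hand side then becomes $\sum_{(z_0',y_0')} 1_{(z_0,y_0)=(z_0',y_0')}\cdot(\text{stuff})$... — more carefully, I need to keep track of which coordinate $L_2$ reads off. The essential computation is: after composing $L_2$ (on the left, at the state $(z_0,y_0)$ with $z_0$ playing the role of the ``$u$'' summation variable weighted by $1_{u=0}+2\cdot1_{u>0}$) with $Q_2$, the sum over the intermediate value $z$ of the product $(1_{z_0=0}+2\cdot1_{z_0>0})R(z_0,x)\overset{z_0\leftarrow}{P}(y_0,z)$ — summed over $z_0$ from $0$ to the relevant bound — is exactly the left side of identity \eqref{desintegration1}, which evaluates to $(1-q)(1_{x=0}+2\cdot1_{x>0})q^{x\vee z + y_0 - 2z}$. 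Multiplying by the remaining right-jump factor $P(z,y)=(1-q)q^{y-z}$ and the $\frac{1}{s_2(y)}$ normalisation from $L_2$, and comparing with the explicit form of $S_2$ from Proposition~\ref{explicitPd} (where $S_2((z_0,y_0),(z,y))\propto q^{y_0+y-2z}$ with the $\frac{1}{1+q}$ correction when $z=0$), gives the claimed equality — the $q^{x\vee z}$ versus $q^{y}$ bookkeeping and the $z=0$ boundary case matching up precisely because $R$ has the $q^{x+y}$ reflection term that produces the $1+q$ denominators.

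\textbf{Main obstacle.} The genuine work is the careful bookkeeping of the three coordinates of the reduced chain and verifying that the intermediate position $z$ is summed on the correct range with the correct weights so that identity \eqref{desintegration1} applies verbatim; in particular one must check the $z_0=0$ boundary (particle near the wall) separately, where the $R$-kernel's reflection term $q^{x+y}$ combines with the geometric part to give the $\frac{1}{1+q}$ prefactor of $S_2$ in the $z=0$ case. I expect no conceptual difficulty beyond this case analysis: once the factorisation $Q_2 = \overset{z_0\leftarrow}{P}\cdot R \cdot P$ is in hand and the $L_2$-weights $(1_{x=0}+2\cdot1_{x>0})$ are matched against the weights in \eqref{desintegration1}, the identity is a direct substitution. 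I would organise the writeup as: (i) state the factorisation of $Q_2$; (ii) expand $L_2Q_2$ and $S_2L_2$ at a generic point; (iii) apply \eqref{desintegration1}; (iv) dispose of the $z=0$ case; (v) conclude.
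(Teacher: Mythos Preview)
Your overall plan---factorise $Q_2$ into its elementary one-step kernels, compose with $L_2$, and collapse the sum via identity (\ref{desintegration1})---is exactly the paper's approach. But the execution has two concrete errors, the second of which breaks the computation.

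\emph{Coordinate confusion.} The reduced chain on $\mathcal{W}^+_{2,3}$ records $(X_1^2(n-\tfrac12),X_1^2(n))$, so $z_0$ is the previous half-step position of $X_1^2$, \emph{not} $X_1^1$. The kernel $L_2$ attaches an extra coordinate $u=X_1^1(n)$ with weight $(1_{u=0}+2\cdot 1_{u>0})/s_2(y_0)$, and it is this $u$---not $z_0$---that gets summed and plays the role of the variable in (\ref{desintegration1}). Consequently the correct factorisation of $Q_2$ on the three-component state space is
\[
Q_2((u,z_0,y_0),(x,z,y))=R(u,x)\,\overset{u\leftarrow}{P}(y_0,z)\,P(x\vee z,y),
\]
which does not depend on $z_0$ at all.

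\emph{Missing pushing.} Your claim that ``$X_1^2(n+1)=X_1^2(n+\tfrac12)+\xi_1^2(n+1)$'' omits the push by $X_1^1(n+1)$: the model specifies $\tilde X_1^2(n+\tfrac12)=\max(X_1^2(n+\tfrac12),X_1^1(n+1))$ before the right jump, so the last factor is $P(x\vee z,y)$, not $P(z,y)$. This $\vee$ is essential. After applying (\ref{desintegration1}) the sum over $u$ produces $(1-q)(1_{x=0}+2\cdot 1_{x>0})q^{x\vee z+y_0-2z}$; multiplying by $P(x\vee z,y)=(1-q)q^{y-x\vee z}$ the factor $q^{x\vee z}$ cancels exactly, leaving $(1-q)^2 q^{y_0+y-2z}$, which matches $S_2$. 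With your $P(z,y)$ in place of $P(x\vee z,y)$ you would be left with a spurious factor $q^{(x-z)_+}$, and the identity fails whenever $x>z$, i.e.\ precisely when the push is active. Fix the factorisation and the rest of your outline goes through verbatim; the $z=0$ case need not be treated separately (the paper simply notes that since both sides are Markov kernels it suffices to check $z>0$).
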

 \begin{proof} For $(x,z,y),(x',z',y')\in \N\times \mathcal{W}^+_{2,3}$ such that $x\le y$ and $x'\le y'$ \[
Q_2((x,z,y),(x',z',y'))=R(x,x')\overset{x\leftarrow}{P}(y,z')P(x'\vee z',y').
\]Thus   
\begin{align*} 
L_2Q_2((z,y),(x',z',y'))=\sum_{x=0}^{z'}\frac{s_1(x)}{s_2(y)}R(x,x')\overset{x\leftarrow}{P}(y,z')P(x'\vee z', y')
\end{align*}
As $L_2$, $S_2$ and $Q_2$ are Markov kernels, it is sufficient to prove the identity for $z'>0$. In that case the identity (\ref{desintegration1}) of Lemma \ref{desintegration} implies that 
\[
\sum_{x=0}^{z'}\frac{(1_{x=0}+2\,1_{x>0})}{s_2(y)}R(x,x')\overset{x\leftarrow}{P}(y,z')=(1-q)(1_{x'=0}+2\,1_{x'>0}) q^{x'\vee z'+y-2z'}.
\]
Thus
\begin{align*} 
L_2Q_2((z,y),(x',z',y'))=  \frac{(1_{x'=0}+2\,1_{x'>0})}{s_2(y)}(1-q)^2q^{y+y'-2z'}, \end{align*} which proves that 
$$L_2Q_2=S_2L_2.$$  
\end{proof}
 Since the random process 
$$(X_1^1(n),X_1^2(n-\frac{1}{2}),X_1^2(n))_{n\ge 1}$$
is Markovian with transition kernel $Q_2$,  the intertwining property stated in Lemma \ref{intertwining2} and the criterion of Pitman and Rogers given in \cite{PitmanRogers} imply the following proposition. It states that the second row of the random process on the  set of Gelfand-Tsetlin patterns is Markovian and gives its transition kernel. 
\begin{prop} \label{bigproc1}
We let $X_1^{2}(-\frac{1}{2})=X_1^{2}(1)=0$.  The random process
$$(X_1^{2}(n-\frac{1}{2}),X_1^{2}(n))_{n\ge 0}$$
is a Markov process on $\mathcal{W}^+_{2,3}$ with transition kernel $S_2$.
\end{prop}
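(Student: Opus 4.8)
The plan is to apply the Pitman--Rogers criterion \cite{PitmanRogers} to the Markov chain $(X_1^1(n),X_1^2(n-\frac12),X_1^2(n))_{n\ge 1}$ with transition kernel $Q_2$, using the intertwining $L_2 Q_2 = S_2 L_2$ of Lemma \ref{intertwining2}. The criterion requires, besides this intertwining, that $L_2$ is a genuine Markov kernel (a stochastic matrix) from $\mathcal{W}^+_{2,3}$ to $\N\times\mathcal{W}^+_{2,3}$, that the pushforward map $\phi(x,z,y)=(z,y)$ satisfies $L_2((z_0,y_0),\phi^{-1}(z_0,y_0))=1$ (so that $\phi$ applied to the big process recovers the small coordinate), and that the initial law of the big chain is of the form $L_2(\,\cdot\,,\cdot)$ evaluated at the initial point of the small chain. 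Granting these hypotheses, Pitman--Rogers yields that $\phi$ of the big chain, namely $(X_1^2(n-\frac12),X_1^2(n))_{n\ge 0}$, is itself Markov with transition kernel $S_2$, which is exactly the assertion.

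First I would verify the structural properties of $L_2$. By its definition $L_2((z_0,y_0),(x,z,y)) = (1_{x=0}+2\cdot 1_{x>0})\frac{1}{s_2(y)}1_{x\le y}1_{(z_0,y_0)=(z,y)}$, so it is supported on triples with $(z,y)=(z_0,y_0)$ and $0\le x\le y_0$; summing over $x$ gives $\frac{1}{s_2(y_0)}\sum_{x=0}^{y_0}(1_{x=0}+2\cdot 1_{x>0}) = \frac{1}{s_2(y_0)}(1 + 2y_0)$. One must check this equals $1$, i.e. $s_2(\gamma)=1+2y_0$ for $\gamma=(z_0,y_0)$; this is the cardinality of $GT_2(z_0,y_0)$ and follows from Definition \ref{defnGT} (the first row $x^1\in\Z$ interlaces $|x^2|=(z_0,y_0)$ after the absolute-value convention, giving $z_0\le x^1_1\le y_0$ together with the wall relation, hence $2y_0+1$ choices) — or one can simply cite that $s_2$ here plays the role of a dimension. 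The concentration property $L_2((z_0,y_0),\{(x,z,y): (z,y)=(z_0,y_0)\})=1$ is immediate from the indicator $1_{(z_0,y_0)=(z,y)}$. For the initial condition, with the convention $X_1^2(-\frac12)=X_1^2(1)=0$ and $X_1^1(\cdot)$ starting at $0$, the big chain at the relevant initial time sits at $(0,0,0)$, and $L_2((0,0),(x,z,y))$ is supported exactly on $(0,0,0)$ with mass $1$; so the initialization is consistent with applying Pitman--Rogers.

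With all hypotheses in place, the conclusion of \cite{PitmanRogers} is that $(\phi(X_1^1(n),X_1^2(n-\frac12),X_1^2(n)))_{n\ge 1} = (X_1^2(n-\frac12),X_1^2(n))_{n\ge 1}$ is a Markov chain on $\mathcal{W}^+_{2,3}$ with transition kernel $S_2$ and that moreover its conditional law given the big chain is $L_2$; extending the index to $n\ge 0$ via the stated conventions is harmless since everything starts deterministically at $0$. The main obstacle is really just bookkeeping: one must confirm that the process $(X_1^1(n),X_1^2(n-\frac12),X_1^2(n))_{n\ge1}$ is Markov with the \emph{claimed} kernel $Q_2((x,z,y),(x',z',y')) = R(x,x')\overset{x\leftarrow}{P}(y,z')P(x'\vee z',y')$ — this means tracing through the model's update rules of section \ref{modeldescription} for $k=2$: at time $n+\frac12$ the particle $X_1^2$ jumps left blocked by $X_1^1(n)$ (contributing $\overset{x\leftarrow}{P}(y,z')$), at time $n+1$ the particle $X_1^1$ moves according to $R$ (contributing $R(x,x')$) pushing $X_1^2$ to $\max(X_1^2(n+\frac12),X_1^1(n+1))=x'\vee z'$ which then jumps right by a geometric step (contributing $P(x'\vee z',y')$), and these three randomizations are independent. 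Once $Q_2$ is pinned down, Lemma \ref{intertwining2} does the analytic work and Pitman--Rogers does the rest.
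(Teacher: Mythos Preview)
Your proposal is correct and follows exactly the paper's approach: the paper deduces Proposition~\ref{bigproc1} from the Markovianity of $(X_1^1(n),X_1^2(n-\tfrac12),X_1^2(n))_{n\ge1}$ with kernel $Q_2$, the intertwining $L_2Q_2=S_2L_2$ of Lemma~\ref{intertwining2}, and the Pitman--Rogers criterion. You have simply made explicit the verification of the Pitman--Rogers hypotheses (stochasticity of $L_2$, the concentration property, and the initial condition) that the paper leaves implicit; one tiny notational slip is that $s_2$ is applied to the second row $y_0$ alone rather than to the pair $(z_0,y_0)$, but your computation $s_2(y_0)=2y_0+1$ is correct.
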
 
As for $(z,y)\in \mathcal{W}^+_{2,3}$ the probability $S_2((z,y), .)$ doesn't depend on $z$, Theorem \ref{maintheo} easily follows  from 
Proposition \ref{bigproc1} when $k = 2$. 
\\
\paragraph{} For the general case one defines the random process $(Z^k(n),Y^k(n))_{n\ge 1}$ by letting
\begin{align*}
Z^{k}(n)&=(X_1^{k}(n-\frac{1}{2}),\dots,X_{[\frac{k}{2}]}^{k}(n-\frac{1}{2})), \\
Y^{k}(n)&=X^{k}(n),
\end{align*}
for $n\ge 1$ and $
Z^{k}(0)=Y^{k}(0)=0$. Let us notice that $Z^k$ is equal to $X^k$ when $k$ is even, whereas it is obtained from $X^k$ by deleting its smallest component  when $k$ is odd.  We consider the subset $\mathcal{W}^+_{k,k+1}$  of $\mathcal{W}^+_{k}\times \mathcal{W}^+_{k+1}$ defined by
$$\mathcal{W}^+_{k,k+1}=\{(z,y)\in \mathcal{W}^+_{k}\times \mathcal{W}^+_{k+1}: z\preceq y\},$$ and  define  a Markov kernel $S_k$ on $\mathcal{W}^+_{k,k+1}$ by letting for every $(z,y),(z',y')\in \mathcal{W}^+_{k,k+1}$ 
\begin{align}\label{Skeven}
S_{k}((z,y),(z',y'))&=(1-q)^{k}\frac{s_{k}(y')}{s_{k}(y)}q^{\sum_{i=1}^{r}(y_i+y'_i-2z_i)}(1_{z_r>0}+\frac{1_{z_r=0}}{1+q})1_{ z'\preceq y,y'}
\end{align} 
when $k=2r$, and  
\begin{align}\label{Skodd}
S_{k}((z,y),(z',y'))&=(1-q)^{k-1}\frac{s_{k}(y')}{s_{k}(y)}R(y_r,y'_r)q^{\sum_{i=1}^{r-1}(y_i+y'_i-2z_i)}1_{ z'\preceq y,y'} 
\end{align}
when $k=2r-1$. The fact that for $(z,y)\in \mathcal{W}^+_{k,k+1}$ the measure $S_k((z,y),.)$  is a  probability measure is a consequence of Proposition \ref{explicitPd} with $d=k+1$.
 \begin{notaetoile}
Since for $(z,y)\in \mathcal{W}^+_{k,k+1}$ the probability $S_k((z,y), .)$ doesn't depend on $z$, it is denoted by $S_k(y,.)$ when there is no ambiguity.
 \end{notaetoile}
 
Even if the Markov kernel $P_{k+1}$ is relevant for our purpose, we'll prove Theorem \ref{maintheo} showing that the Markov kernel $S_k$ describes the evolution of the $k^{\textrm{th}}$ row of the process $(X(t),t\ge 0)$ observed at the whole time. We'll prove it by induction on $k$.

\begin{lem} \label{Qk} 
If the random process $$(Z^{k-1}(n),Y^{k-1}(n))_{n\ge 1},$$ is a Markov process  on $\mathcal{W}^+_{k-1,k}$ with transition kernel $S_{k-1}$ then the random process
$$(Y^{k-1}(n),Z^{k}(n),Y^{k}(n))_{n\ge 1}$$
is a Markov process on  the set
$$\{(x,(z,y))\in \mathcal{W}^+_k\times \mathcal{W}^+_{k,k+1}: x\preceq y\}.$$
If we denote its transition kernel by $Q_{k}$ then for $(u,z,y),(x,z',y')\in  \mathcal{W}^+_k\times \mathcal{W}^+_{k,k+1}$ such that $u\preceq y$ and $x\preceq y'$  
\begin{align}\label{Qlodd} 
Q_{k}((u,z,y),(x,z',y'))=\sum_{v\in \N^{r-1}}S_{k-1}&(u,(v,x))\overset{\rightarrow v_{r-1}}{R}(y_r\wedge v_{r-1},y'_r) \nonumber\\
&\times \prod_{i=1}^{r-1}\overset{u_{i}\leftarrow}{P}(y_{i}\wedge v_{i-1},z'_{i})\prod_{i=1}^{r-1}\overset{ \rightarrow v_{i-1}}{P}(z'_{i}\vee x_{i},y'_{i}),
\end{align}
when $k=2r-1$ and
\begin{align}\label{Qleven}
Q_{k}((u,z,y),(x,z',y'))=\sum_{v\in \N^{r-1}}S_{k-1}&(u,(v,x))\overset{u_{r}\leftarrow}{P}(y_{r}\wedge v_{r-1},z'_{r}) \nonumber\\&\times \prod_{i=1}^{r-1} \overset{u_{i}\leftarrow}{P}(y_{i}\wedge v_{i-1},z'_{i})\prod_{i=1}^{r} \overset{ \rightarrow v_{i-1}}{P}(z'_{i}\vee x_{i},y'_{i}),
\end{align}
 when $k=2r$. In the odd and the even cases $v_0=+\infty$ and the sum runs over $v=(v_1,\dots,v_{r-1})\in \N^{r-1}$ such that $v_i\in\{y'_{i+1},\dots,x_i\wedge z'_i\}$, for $i\in\{1,\dots,r-1\}$
\end{lem}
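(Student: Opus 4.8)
The plan is to build the process $(Y^{k-1}(n),Z^k(n),Y^k(n))_{n\ge 1}$ from the update rules of Section \ref{modeldescription} restricted to the $(k-1)^{\textrm{th}}$ and $k^{\textrm{th}}$ rows, and to read off its transition kernel. First I would note that, by the dynamics described, the evolution of row $k$ between times $n$ and $n+1$ depends only on the current configuration of rows $k-1$ and $k$: at time $n+\frac12$ the particles of row $k$ are first pushed by the already-updated row $k-1$ (via $\tilde X_i^k(n)=\min(X_i^k(n),X_{i-1}^{k-1}(n+\frac12))$) and then jump leftwards blocked by $X^{k-1}(n)$; at time $n+1$ they are pushed by the updated $X^{k-1}(n+1)$ and jump rightwards blocked by $X^{k-1}(n+\frac12)$. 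Since $(Z^{k-1},Y^{k-1})$ is assumed Markov with kernel $S_{k-1}$, and since by the interlacing constraints $Y^{k-1}(n+\frac12)$ lies between $Y^{k-1}(n)=Y^{k-1}$ and $Y^{k-1}(n+1)=$ the new value, the triple $(Y^{k-1},Z^k,Y^k)$ together with the auxiliary $Z^{k-1}$ carries enough information; but $S_{k-1}((z,y),\cdot)$ does not depend on $z$, so one may drop $Z^{k-1}$ and retain only $(Y^{k-1},Z^k,Y^k)$. This gives Markovianity on the stated state space, the constraint $x\preceq y$ being exactly the interlacing $|X^{k-1}(n)|\preceq|X^k(n)|$.

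Next I would compute the kernel explicitly by decomposing over the intermediate configuration of row $k-1$ at time $n+\frac12$, call it $v=(v_1,\dots,v_{r-1})$ (or $(v_1,\dots,v_r)$ in the even case). Conditionally on $Y^{k-1}(n)=u$ and $Y^{k-1}(n+1)=x$, the new value of row $k-1$ is produced by the kernel $S_{k-1}(u,(v,x))$ summed over $v$; the ranges $v_i\in\{y'_{i+1},\dots,x_i\wedge z'_i\}$ come from the interlacing constraints linking $v$ to $x$, to $z'=Z^k(n+1)$, and to $y'=Y^k(n+1)$. Given $v$, the half-integer update of row $k$ is a product of independent geometric left-jumps: particle $i$ is pushed to $\min(y_i,v_{i-1})=y_i\wedge v_{i-1}$ then jumps left blocked below by $u_i$, contributing $\overset{u_i\leftarrow}{P}(y_i\wedge v_{i-1},z'_i)$. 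Then the integer update is a product of geometric right-jumps: particle $i$ is pushed to $z'_i\vee x_i$ then jumps right blocked above by $v_{i-1}$, contributing $\overset{\rightarrow v_{i-1}}{P}(z'_i\vee x_i,y'_i)$. The only difference between the parities is the treatment of the last particle of row $k$: when $k=2r-1$ this particle sits at the wall, so its combined left-then-right move is governed by the reflected kernel $\overset{\rightarrow v_{r-1}}{R}(y_r\wedge v_{r-1},y'_r)$, whereas when $k=2r$ it behaves like the others, yielding the extra factors $\overset{u_r\leftarrow}{P}(y_r\wedge v_{r-1},z'_r)$ and $\overset{\rightarrow v_{r-1}}{P}(z'_r\vee x_r,y'_r)$. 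Assembling these factors produces exactly formulas \eqref{Qlodd} and \eqref{Qleven}.

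The main obstacle I anticipate is bookkeeping rather than conceptual: one must verify carefully that the pushing and blocking interactions in the update rules of Section \ref{modeldescription} genuinely factor, across the two rows, into the stated product over $i$ of one-dimensional kernels, i.e. that the intermediate positions $\tilde X$ decouple the left-move stage from the right-move stage and that the interlacing constraints on $v$ are precisely those that make the blocking bounds $y_i\wedge v_{i-1}$ and $z'_i\vee x_i$ meaningful (in particular that $z'\preceq y$ and $z'\preceq y'$ hold so the $\overset{a\leftarrow}{P}$ and $\overset{\rightarrow b}{P}$ are evaluated in their valid ranges). One also checks the boundary conventions $v_0=+\infty$ (no blocking from the left for the leading particle) and, in the odd case, that row $k-1$ of $X^{k-1}$ has $[\frac{k-1+1}{2}]=r-1$ components whereas $Z^k$ has $[\frac{k}{2}]=r-1$ and $Y^k$ has $[\frac{k+1}{2}]=r$ components, so that the reflected particle $y_r$ is indeed the unmatched one near the wall. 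Once the factorization is established, substituting the explicit form of $S_{k-1}$ and the one-step kernels from the preceding Lemma is routine.
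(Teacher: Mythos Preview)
Your proposal is correct and follows the same approach as the paper: the paper's own proof is extremely terse, simply observing that the quadruple $(Z^{k-1},Y^{k-1},Z^k,Y^k)$ is Markovian by construction, that $Z^{k-1}$ can be dropped because $S_{k-1}((z,y),\cdot)$ does not depend on $z$, and that the explicit formulas ``are deduced from the blocking and pushing interactions of the model.'' Your write-up is essentially an expansion of that last sentence, correctly identifying $v$ as the truncated row $k{-}1$ at half-time and factoring the update into the one-dimensional kernels $\overset{u_i\leftarrow}{P}$, $\overset{\rightarrow v_{i-1}}{P}$ and (at the wall) $\overset{\rightarrow v_{r-1}}{R}$.

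One small slip: you write that $v=(v_1,\dots,v_{r-1})$ ``(or $(v_1,\dots,v_r)$ in the even case)''. In fact $v\in\N^{r-1}$ in \emph{both} parities, since $Z^{k-1}$ has $[\tfrac{k-1}{2}]=r-1$ components whether $k=2r-1$ or $k=2r$; the row-$k$ update never uses the smallest coordinate $X_r^{k-1}(n+\tfrac12)$ when $k-1$ is odd, because particle $X_r^k$ is blocked below by $X_r^{k-1}(n)=u_r$ and pushed/blocked laterally only by $X_{r-1}^{k-1}(n+\tfrac12)=v_{r-1}$. This is consistent with the ranges stated in the lemma and with your later formulas, so just drop the parenthetical.
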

\begin{proof} The dynamic of the model implies that the process $$(Z^{k-1}(n),Y^{k-1}(n),Z^{k}(n),Y^{k}(n),n\ge 0)$$ is Markovian. Since for $(z,y)\in \mathcal{W}^+_{k-1,k}$ the transition probability $S_{k-1}((z,y),.)$ doesn't depend on $z$, the Markovianity of the process  $$(Y^{k-1}(n),Z^{k}(n),Y^{k}(n),n\ge 0)$$ follows. Identities (\ref{Qlodd}) and (\ref{Qleven}) are deduced from the blocking and pushing interactions of the model.  
\end{proof}
Proof of Theorem \ref{maintheo} for every integer $k$  follows as in the case  when $k=2$ from an intertwining property. Let us define Markov Kernel $L_{k}$  from $ \mathcal{W}^+_{k,k+1}$ to $ \mathcal{W}^+_{k}\times  \mathcal{W}^+_{k,k+1}$  by letting for $x\in  \mathcal{W}^+_{k}$ and $(z,y),(z_0,y_0)\in  \mathcal{W}^+_{k,k+1}$
\begin{align}\label{Lkodd}
L_k((z_0,y_0),(x,y,z))=1_{(z_0,y_0)=(z,y)}\frac{s_{k-1}(x)}{s_k(y)}1_{x\preceq y},
\end{align}
when $k$ is odd and 
\begin{align}\label{Lkeven}
L_k((z_0,y_0),(x,y,z))=(1_{\{0\}}(x_{\frac{k}{2}})+2\,1_{\N^*}(x_{\frac{k}{2}}))1_{(z_0,y_0)=(z,y)}\frac{s_{k-1}(x)}{s_k(y)}1_{x\preceq y},
\end{align}
when $k$ is even. The following proposition generalizes Lemma \ref{intertwining2}.
\begin{prop} \label{intertwining}
The Markov kernels $S_k$, $L_k$ and $Q_k$ defined as in the identities (\ref{Skeven}), (\ref{Skodd}) and (\ref{Lkodd}),  (\ref{Lkeven})  and (\ref{Qlodd}), (\ref{Qleven}) satsify the intertwining
$$L_kQ_k=S_kL_k.$$
\end{prop}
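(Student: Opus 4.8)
The plan is to verify the identity $L_kQ_k=S_kL_k$ by evaluating both sides on a pair $((z_0,y_0),(x',z',y'))$ and reducing the sum over intermediate configurations to the closed form of $S_k$ given in \eqref{Skeven}--\eqref{Skodd}. Because $L_k$ is supported on triples with $(z_0,y_0)=(z,y)$, the left side $L_kQ_k((z_0,y_0),(x',z',y'))$ is a single sum over the intermediate $(k-1)$-th row $x$ (weighted by $w(x)/s_k(y_0)$ with $w(x)=s_{k-1}(x)$ or $(1_{\{0\}}+2\cdot1_{\N^*})(x_{k/2})s_{k-1}(x)$), of $Q_k((x,z_0,y_0),(x',z',y'))$; and $Q_k$ is itself given by \eqref{Qlodd} or \eqref{Qleven} as a sum over an auxiliary row $v$, with the factor $S_{k-1}(x,(v,x'))$ carrying a further $(k-2)$-th-row weight. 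The right side $S_kL_k((z_0,y_0),(x',z',y'))=S_k(y_0,y')\,\frac{w(x')}{s_k(y')}1_{x'\preceq y'}$ is explicit. So the whole task is a combinatorial/probabilistic identity among one-dimensional kernels.

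\textbf{Key steps.} First I would carry out the induction on $k$ already announced: the base cases $k=1,2$ are done in the excerpt (Lemma \ref{intertwining2}), so I assume the process $(Z^{k-1},Y^{k-1})$ is Markov with kernel $S_{k-1}$, which licenses Lemma \ref{Qk} and the formula for $Q_k$. Second, I would substitute the inductive description of $S_{k-1}$ (itself a sum over the $(k-2)$-th auxiliary row, say $v'$) into $Q_k$, so that $L_kQ_k$ becomes a nested sum over $x$, $v$, and $v'$. Third, I would interchange the order of summation and first perform the sum over $x$ (the $(k-1)$-th row): by the interlacing constraints $v\preceq x$ from $S_{k-1}(x,(v,x'))$ and $x\preceq y_0$ from $L_k$, the $x$-sum factorizes coordinatewise, and each coordinate is exactly of the type appearing in Lemma \ref{desintegration} --- identity \eqref{desintegration1} handles the near-wall coordinate (the role of $R$ and $\overset{u\leftarrow}{P}$), while \eqref{desintegration2} handles the generic coordinates $\sum_u q^u\overset{u\leftarrow}{P}(x,y)=q^{x-y}q^a$. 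This collapses the $x$-sum and simultaneously the weight $w(x)$, producing a power of $q$ of the form $q^{\sum(y_i - v_i)}$ times a reduced sum. Fourth, the remaining sum is over the rightward auxiliary variables, and one applies \eqref{desintegration3} for the generic coordinates and \eqref{desintegration4} for the near-wall one ($\sum_v q^{v\vee y-2v}\overset{\rightarrow v}{R}(y\wedge v,y')=\frac{1}{1-q}q^{-a}R(y,y')$) to collapse the $v$-sum; what survives should be precisely $(1-q)^k\frac{s_k(y')}{s_k(y_0)}q^{\sum(y_i+y'_i-2z'_i)}(\cdots)\,\frac{w(x')}{s_k(y')}1_{x'\preceq y'}$, i.e.\ $S_k(y_0,y')L_k((z_0,y_0),(x',z',y'))$. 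Matching the indicator functions and the boundary $(1_{z'_r>0}+\frac{1_{z'_r=0}}{1+q})$ vs.\ the $R(y_r,y'_r)$ case distinction requires care about the parity of $k$ but is bookkeeping.

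\textbf{Main obstacle.} The hard part is the combinatorial reorganization of the triple (or, tracking the full induction, multiply) nested sum --- getting the ranges of the auxiliary variables $v$ (with the convention $v_0=+\infty$ and $v_i\in\{y'_{i+1},\dots,x_i\wedge z'_i\}$) to line up correctly with the interlacing conditions so that the one-dimensional identities of Lemma \ref{desintegration} apply coordinate by coordinate, and so that the telescoping of the $q$-powers across the two passes (leftward then rightward) produces exactly the exponent $\sum_{i=1}^{r}(y_i+y'_i-2z'_i)$ with no leftover cross terms. The treatment of the reflecting coordinate near the wall --- where the kernel $R$, the absolute value, and the Bernoulli-type factor $\frac{1}{1+q}$ all interact --- is where \eqref{desintegration1} and \eqref{desintegration4} must be invoked precisely, and where the even/odd cases genuinely diverge; everything else is the kind of straightforward computation the paper defers. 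Finally, once the intertwining is established, Theorem \ref{maintheo} follows by the Rogers--Pitman criterion \cite{PitmanRogers} exactly as in the $k=2$ case, together with the observation that $S_k((z,y),\cdot)$ does not depend on $z$, so that the projection onto $Y^k=X^k$ is Markov with a kernel read off from $S_k$, which matches $P_{k+1}$ (even $k$) or its symmetrized version (odd $k$) via Proposition \ref{explicitPd}.
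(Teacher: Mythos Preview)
Your approach is essentially identical to the paper's: expand $L_kQ_k((z,y),(x',z',y'))$ as a double sum over the old $(k-1)$-th row (your $x$, the paper's $u$) and the auxiliary row $v$ appearing in $Q_k$, collapse the $u$-sum coordinatewise via identities \eqref{desintegration1}--\eqref{desintegration2}, and then collapse the $v$-sum via \eqref{desintegration3} (together with \eqref{desintegration4} in the odd case) to recover $S_kL_k$. One minor correction: $S_{k-1}$ is already given in closed form by \eqref{Skeven}--\eqref{Skodd} and is \emph{not} ``itself a sum over the $(k-2)$-th auxiliary row $v'$'', so there is no third summation variable --- the computation is a double sum, which is exactly what you in fact carry out in your steps three and four.
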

\begin{proof} For $(z,y)\in \mathcal{W}^+_{k,k+1}$, $(x,z',y')\in \mathcal{W}^+_k\times \mathcal{W}^+_{k,k+1}$ such that $x\preceq y'$,
\begin{align*}
L_{k}Q_{k}((z,y),(x,z',y'))&=\sum_{u\in \mathcal{W}_{k}^+}L_{k}((z,y),(u,z,y))Q_{k}((u,z,y),(x,z',y')).\end{align*}
We prove separately the even and the odd cases.  When $k=2r$, the sum is equal to
\begin{align*}
&\sum_{(u,v)\in \N^r\times \N^{r-1} } \frac{s_{k-1}(x)}{s_k(y)} (1_{\{0\}}(u_r)+2\,1_{\N^*}(u_r))(1-q)^{2r-2} R(u_r,x_r)q^{\sum_{i=1}^{r-1}(x_i+u_i-2v_i)}\nonumber\\&\quad \quad \quad\quad \quad \quad\quad \quad\quad \quad\quad \times P(z'_{1}\vee x_{1},y'_{1})\prod_{i=1}^{r} \overset{u_{i}\leftarrow}{P}(y_{i}\wedge v_{i-1},z'_{i})\prod_{i=2}^{r}  \overset{ \rightarrow v_{i-1}}{P}(z'_{i}\vee x_{i},y'_{i}).
\end{align*}
where the sum runs over $(u,v)\in \N^r\times \N^{r-1}$ such that $u_r\in\{0,\dots,z_r'\}$,  $v_i\in\{y'_{i+1},\dots,x_i\wedge z'_i\}$, $u_i\in \{v_i\vee y_{i+1},\dots,z'_i\}$, for $i\in\{1,\dots,r-1\}$.
Thus the sum equals
\begin{align*}
&\sum_{v\in \N^{r-1}} \frac{s_{k-1}(x)}{s_k(y)} (1-q)^{2r-2}q^{\sum_{i=1}^{r-1}x_i}P(z'_{1}\vee x_{1},y'_{1}) \prod_{i=2}^{r}  q^{-2v_{i-1}}\overset{ \rightarrow v_{i-1}}{P}(z'_{i}\vee x_{i},y'_{i})\\&\quad \quad \quad\quad \quad  \times \sum_{ u\in \N^r }  (1_{\{0\}}(u_r)+2\,1_{\N^*}(u_r))(1-q)^{2r-2} R(u_r,x_r) \prod_{i=1}^{r} q^{u_i}\overset{u_{i}\leftarrow}{P}(y_{i}\wedge v_{i-1},z'_{i}) .
\end{align*}
For a fixed $v$ the sum over $u$ is equal to
$$\sum_{u_r=0}^{z_r'}(1_{\{0\}}(u_r)+2\,1_{\N^*}(u_r))R(u_r,x_r)\overset{u_{r}\leftarrow}{P}(y_{r}\wedge v_{r-1},z'_{r})  \prod_{i=1}^{r-1}\sum_{u_i=v_i\vee y_{i+1}}^{z_i'}q^{u_i}\overset{u_{i}\leftarrow}{P}(y_{i}\wedge v_{i-1},z'_{i}).$$
Since $L_k$ and $Q_k$ are Markov kernels it is sufficient to consider the case when $z_r>0$. In that case, identities (\ref{desintegration1}) and (\ref{desintegration2}) of Lemma  \ref{desintegration}  imply that the   sum over $u$ equals
$$(1_{\{0\}}(x_r)+2\,1_{\N^*}(x_r))q^{x_r\vee z'_r+y_r\wedge v_{r-1}-2z_{r}'}(1-q)\prod_{i=1}^{r-1}q^{y_{i}\wedge v_{i-1}-z_{i}'+v_i\vee y_{i+1}},$$
i.e.
$$(1_{\{0\}}(x_r)+2\,1_{\N^*}(x_r))q^{x_r\vee z'_r+y_r-2z'_r+\sum_{i=1}^{r-1}y_i+v_i-z_i'}(1-q).$$
Thus
\begin{align*}
L_{2r}Q_{2r}((z,y),(x,z',y'))
\end{align*}
equals 
\begin{align*}
& \frac{s_{k-1}(x)}{s_k(y)} (1-q)^{2r-1}(1_{\{0\}}(x_r)+2\,1_{\N^*}(x_r))q^{x_r\vee z'_r+y_r-2z'_r+\sum_{i=1}^{r-1}y_i-z_i'}q^{\sum_{i=1}^{r-1}x_i} \\&\quad \quad \quad \quad\quad \quad \quad \quad\quad \quad  \times P(z'_{1}\vee x_{1},y'_{1})\prod_{i=2}^{r}  \sum_{v_{i-1}=y_{i}}^{x_{i-1}\vee z'_{i-1}} q^{-v_{i-1}}\overset{ \rightarrow v_{i-1}}{P}(z'_{i}\vee x_{i},y'_{i}).
\end{align*}
Identity (\ref{desintegration3}) of Lemma \ref{desintegration} gives that 
\begin{align*}
 \prod_{i=2}^{r}  \sum_{v_{i-1}=y_{i}}^{x_{i-1}\vee z'_{i-1}} q^{-v_{i-1}}\overset{ \rightarrow v_{i-1}}{P}(z'_{i}\vee x_{i},y'_{i})&=\prod_{i=2}^{r}q^{y'_i-z'_i\vee x_i-x_{i-1}\wedge z_{i-1}'}\\
&=q^{y_r'-z_r'\vee x_r-x_1\wedge z'_1}q^{\sum_{i=2}^{r-1}y'_i-x_i-z_i'},
\end{align*}
which implies 
 \begin{align*}
L_{2r}Q_{2r}((z,y),(x,z',y'))= \frac{s_{k-1}(x)}{s_k(y)} (1-q)^{2r}(1_{\{0\}}(x_r)+2\,1_{\N^*}(x_r))q^{\sum_{i=1}^{r}y_i+y'_i-2z_i'},
\end{align*}
and achieves the proof for the even case. Similarly when $k=2r-1$ 
 \begin{align*}
L_{2r-1}Q_{2r-1}((z,y),(x,z',y'))& =\sum_{u,v\in \N^{r-1}}\frac{s_{k-1}(x)}{s_{k}(y)}q^{\sum_{i=1}^{r-1}x_i-2v_i}\overset{\rightarrow v_{r-1}}{R}(y_r\wedge v_{r-1},y'_r) \\&   \quad \quad \quad \times \prod_{i=1}^{r-1}q^{u_i}\overset{u_{i}\leftarrow}{P}(y_{i}\wedge v_{i-1},z'_{i})\prod_{i=1}^{r-1}\overset{ \rightarrow v_{i-1}}{P}(z'_{i}\vee x_{i},y'_{i}),
\end{align*}
where the sum runs over $(u,v)\in \N^{r-1}\times \N^{r-1}$ such that   $v_i\in\{y'_{i+1},\dots,x_i\wedge z'_i\}$, $u_i\in \{v_i\vee y_{i+1},\dots,z'_i\}$, for $i\in\{1,\dots,r-1\}$. 
We obtain the intertwining in a quite similar way as in the even case, using identities (\ref{desintegration2}), (\ref{desintegration3}) and (\ref{desintegration4}) of Lemma \ref{desintegration}.  \end{proof}
The following proposition generalizes Proposition \ref{bigproc1}.
\begin{prop}  \label{mainprop} The random process $(Z^k(n),Y^k(n))_{n\ge 1}$, 
is Markovian with transition kernel $S_k$ defined in (\ref{Skeven}) and (\ref{Skodd}). 
\end{prop}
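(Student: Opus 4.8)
The plan is to prove Proposition~\ref{mainprop} by induction on $k$, following verbatim the scheme already carried out for $k=2$. The base cases are in hand: for $k=1$ the process reduces to $X^1_1$, which moves according to $R$ at integer times, and indeed $S_1=R$; and $k=2$ is Proposition~\ref{bigproc1}, whose proof is in fact the special case $k=2$ of the argument below. For $k\ge 2$ the three ingredients are: (i) Lemma~\ref{Qk}, which promotes the induction hypothesis ``$(Z^{k-1},Y^{k-1})$ is Markov with kernel $S_{k-1}$'' to ``$(Y^{k-1},Z^k,Y^k)$ is Markov with the explicit kernel $Q_k$''; (ii) the intertwining $L_kQ_k=S_kL_k$ of Proposition~\ref{intertwining}; and (iii) the criterion of Pitman and Rogers \cite{PitmanRogers} identifying Markov functions of a Markov chain.

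So assume $k\ge 2$ and that $(Z^{k-1}(n),Y^{k-1}(n))_{n\ge 0}$ is Markov on $\mathcal W^+_{k-1,k}$ with transition kernel $S_{k-1}$. By Lemma~\ref{Qk} — whose proof also covers the transition out of the all-zero state at time $0$, the degenerate instance $u=z=y=0$ of the formulas (\ref{Qlodd})--(\ref{Qleven}) — the enlarged process $W_n:=(Y^{k-1}(n),Z^k(n),Y^k(n))$, $n\ge 0$, is a Markov chain on $E_k:=\{(x,(z,y))\in\mathcal W^+_k\times\mathcal W^+_{k,k+1}:x\preceq y\}$ with kernel $Q_k$, started from $W_0=(0,0,0)$. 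I then apply the criterion of Pitman and Rogers to $W$ with the projection $\phi:E_k\to\mathcal W^+_{k,k+1}$, $(x,(z,y))\mapsto(z,y)$, and the kernel $L_k$ of (\ref{Lkodd})--(\ref{Lkeven}). Three things are to be verified, all routine. First, $L_k$ is a genuine Markov kernel supported, for each $(z,y)$, on the fibre $\phi^{-1}(z,y)$: the normalization is the recursion $s_k(y)=\sum_{x\preceq y}s_{k-1}(x)$ when $k$ is odd and $s_k(y)=\sum_{x\preceq y}(1_{\{0\}}(x_{k/2})+2\,1_{\N^*}(x_{k/2}))\,s_{k-1}(x)$ when $k$ is even, which is exactly the combinatorial counting of Gelfand--Tsetlin patterns behind Definition~\ref{defnGT} (that is, the restriction rule from $SO(k+1)$ to $SO(k)$). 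Second, the initial law is admissible, $\mathcal L(W_0)=\delta_{(0,0,0)}=\delta_{(0,0)}L_k$, since $x\preceq 0$ forces $x=0$ and $s_{k-1}(0)=s_k(0)=1$. Third, $L_kQ_k=S_kL_k$, which is precisely Proposition~\ref{intertwining}. The criterion then yields that $(\phi(W_n))_{n\ge 0}=(Z^k(n),Y^k(n))_{n\ge 0}$ is a Markov chain with transition kernel $S_k$; its restriction to $n\ge 1$ is the assertion for $k$, and the induction closes.

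The proof above has no genuinely hard step: the analytic work has all been isolated upstream, in Lemma~\ref{Qk} (extracting $Q_k$ from the blocking and pushing rules) and, above all, in Proposition~\ref{intertwining}, whose verification — the technical core of the paper — reduces, via the one-dimensional disintegration identities (\ref{desintegration1})--(\ref{desintegration4}) of Lemma~\ref{desintegration}, to a sequence of geometric-sum computations. The one point worth bearing in mind is the structural fact, immediate from (\ref{Skeven})--(\ref{Skodd}), that $S_j((z,y),\cdot)$ depends only on $y$ for every $j$: used at level $k-1$ it is what allows Lemma~\ref{Qk} to append the $k$-th row — which is driven by, but does not react back upon, rows $1,\dots,k-1$ — without reintroducing a dependence on the hidden upper rows, and used at level $k$ it is what keeps the induction self-sustaining. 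For completeness one may also record the regular conditional law $\mathcal L(Y^{k-1}(n)\mid(Z^k(m),Y^k(m)),\,0\le m\le n)=L_k((Z^k(n),Y^k(n)),\cdot)$ delivered by the same application of the criterion, although it is not needed in the sequel.
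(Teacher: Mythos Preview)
Your proposal is correct and follows essentially the same route as the paper: induction on $k$, with Lemma~\ref{Qk} supplying the kernel $Q_k$ from the induction hypothesis, Proposition~\ref{intertwining} giving the intertwining $L_kQ_k=S_kL_k$, and the Pitman--Rogers criterion closing the step. Your write-up is in fact more explicit than the paper's own proof in checking the auxiliary conditions (that $L_k$ is a probability kernel supported on fibres, and that the initial law is of the form $\delta_{(0,0)}L_k$), but the architecture is identical.
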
 
\begin{proof} The random process $(X^k(t),t\ge 0)$ is conditionally independent of the processes $(X^{l}(t),t\ge 0)$, for $l=1,\dots,k-2$, given the process $(X^{k-1}(t),t\ge 0)$. So the property can be proved by induction on $k$. Proposition \ref{bigproc1} claims that Proposition \ref{mainprop} is true for $k=2$. Suppose that the proposition is true for a fixed interger $k-1$ greater that $1$. Lemma \ref{Qk} implies that  the process
$$(Y^{k-1}(n),Z^{k}(n),Y^{k}(n))_{n\ge 1}$$
is Markovian with transition kernel $Q_k$. The intertwining property of Proposition \ref{intertwining} implies, by using the Pitman and Rogers criterion given in \cite{PitmanRogers}, that   the process $$(Z^{k}(n),Y^{k}(n))_{n\ge 1}$$ is Markovian with probability $S_k$. \end{proof} 
The Markov kernels $P_{k+1}$ and $S_k$ satisfy $$P_{k+1}(y,y')=\sum_{z'\in \mathcal{W}_k^+}S_k(y,(z',y')), \quad y,y'\in  \mathcal{W}_{k+1}^+.$$ 
Thus Theorem \ref{maintheo} is an immediate  corollary of Proposition \ref{mainprop}.

\subsection{Proof of Corollary \ref{theoY}} 

The proof of Corollary \ref{theoY} rests on a similar argument as in section 2.7 of \cite{BorodinFerrari}.
\begin{lem} \label{T1T2} Let $T_1(q)$ and $T_2(q)$ be two (possibly infinite) lower and upper triangular matrices, whose matrix coefficients are polynomials in an indeterminate $q>0$:
$$\left\{
    \begin{array}{l}
       T_1(q)=A_0+qA_1+q^2A_2+\dots, \\
       \\
        T_2(q)=B_0+qB_1+q^2B_2+\dots,
    \end{array}
\right. $$
and assume that $A_0=B_0=I$. Then for $t\in \R_+$,
$$\lim_{q\to 0}(T_1(q)T_2(q))^{[t/q]}=\exp(t(A_1+B_1)).$$
\end{lem}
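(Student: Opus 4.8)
\textbf{Proof plan for Lemma \ref{T1T2}.}
The plan is to reduce the assertion to a single scalar-style estimate: that $T_1(q)T_2(q) = I + q(A_1+B_1) + O(q^2)$ in each fixed matrix entry, and that the $O(q^2)$ error, when raised to the power $[t/q]$, is killed in the limit. First I would write $T_1(q)T_2(q) = I + qC(q)$, where $C(q) = A_1+B_1 + q\,C_1 + q^2 C_2 + \dots$ is, entrywise, a power series in $q$ that converges for small $q$; this uses that the product of a lower triangular and an upper triangular matrix is well defined entrywise (each entry is a finite sum when the matrices are block-finite, or an absolutely convergent sum under whatever summability hypothesis is implicit), and that $A_0 = B_0 = I$ forces the constant term of the product to be $I$ and hence the $q^0$ coefficient of $C(q)$ to vanish beyond $A_1 + B_1$... more precisely the $q^1$-coefficient of the product is exactly $A_1 + B_1$. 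Then the claim becomes
$$\lim_{q\to 0}\bigl(I + qC(q)\bigr)^{[t/q]} = \exp\bigl(t(A_1+B_1)\bigr).$$

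Next I would prove this last limit by the standard comparison of $(I+qC(q))^{[t/q]}$ with $\exp([t/q]\log(I+qC(q)))$ and with $\exp(t(A_1+B_1))$. Set $n = [t/q]$, so $nq \to t$. Write $(I+qC(q))^n = \exp(n\log(I+qC(q)))$, valid for $q$ small since then $\|qC(q)\| < 1$ in operator norm on whatever space is relevant (finite-dimensional, or a suitable Banach algebra in the infinite case; in the triangular/block-finite setting one works entry by entry or on each finite truncation, where norms are genuine). Now $\log(I+qC(q)) = qC(q) + O(q^2\|C(q)\|^2) = q(A_1+B_1) + O(q^2)$, so $n\log(I+qC(q)) = nq(A_1+B_1) + n\,O(q^2) = nq(A_1+B_1) + O(q) \to t(A_1+B_1)$ as $q\to 0$, using $nq \to t$ and $nq^2 \le tq \to 0$. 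Continuity of $\exp$ on the relevant algebra then gives $(I+qC(q))^n \to \exp(t(A_1+B_1))$.

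In the genuinely infinite-dimensional case one cannot use a global operator norm, so the cleanest route is to exploit triangularity: fix indices $i,j$; the $(i,j)$ entry of any product or power of these matrices depends only on the finite submatrix indexed by $\{\min(i,j),\dots,\max(i,j)\}$ — or, more carefully, by a finite index set determined by $i$ and $j$ once one notes $T_1$ lower and $T_2$ upper triangular so their product's $(i,j)$ entry, and inductively every power's $(i,j)$ entry, involves only finitely many intermediate indices lying between $i$ and $j$. Hence it suffices to prove the statement for each such finite principal truncation, where the finite-dimensional argument above applies verbatim, and then read off the entrywise limit. I expect the main obstacle to be precisely this bookkeeping: making rigorous that each matrix entry of $(T_1(q)T_2(q))^{[t/q]}$ is governed by a finite-dimensional computation (so that norms, $\log$, and continuity of $\exp$ are legitimate), and checking that the constant term of the product really is $I$ with linear term exactly $A_1+B_1$ — everything after that is the routine $(1+x/n)^n \to e^x$ estimate upgraded to matrices.
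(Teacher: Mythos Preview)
Your approach is the same as the paper's: reduce the infinite case to finite matrices via the triangularity hypothesis, and then invoke the standard $(I+qC)^{[t/q]}\to e^{tC(0)}$ limit. The paper's own proof is literally two lines---it cites Lemma~2.21 of \cite{BorodinFerrari} for the reduction and declares the finite case ``standard''---whereas you have actually written out the $\log$/$\exp$ computation for the finite case and correctly identified the infinite-to-finite bookkeeping as the real content.

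One caution on that bookkeeping: your specific claim that the $(i,j)$ entry of powers of $T_1T_2$ depends only on indices lying between $i$ and $j$ is not right as stated. The product $T_1T_2$ is neither upper nor lower triangular, and in $(T_1T_2)^n_{ij}$ the intermediate indices can exceed $\max(i,j)$ (e.g.\ already $(T_1T_2)^2$ contains a $BA$ term, which is an infinite sum). What actually localizes the computation---and what the paper is implicitly borrowing from \cite{BorodinFerrari}---is that $A_0=B_0=I$ forces every off-diagonal step to cost at least one power of $q$, so for each fixed power $q^m$ only finitely many paths contribute; together with a grading with finite level sets (present in the intended application) this is what makes the entrywise reduction to finite principal submatrices legitimate.
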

\begin{proof} Because of  the triangularity assumption, the lemma follows, as in the proof of Lemma 2.21 of \cite{BorodinFerrari}, from the claim for finite size matrices, which is standard.
\end{proof}
Lemma \ref{T1T2} implies immediately the following proposition. 
\begin{prop} \label{conv0} Letting $q=\frac{1}{N}$, the process $(X([Nt]),t\ge 0)$ converges in distribution towards the process $(Y(t),t\ge 0)$ as $N$ goes to infinity.
\end{prop}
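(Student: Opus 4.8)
The plan is to write the one-step transition kernel $\mathbf Q(q)$ of the pattern-valued process $(X(n))_{n\ge 0}$ as a product $T_1(q)T_2(q)$, where $T_1(q)$ is the law of $X(n+\frac12)$ given $X(n)$ (the leftward sweep of section \ref{modeldescription}) and $T_2(q)$ is the law of $X(n+1)$ given $X(n+\frac12)$ (the rightward sweep, including the $R$-moves of the wall particles); this factorisation holds because the half-integer-time and integer-time families of jump variables are independent and are used disjointly by the two sweeps. First I would check that $T_1$ and $T_2$ fit the hypotheses of Lemma \ref{T1T2}. Grade a pattern $x$ by its total occupation number $|x|:=\sum_{i,j}x^i_j$: a leftward sweep never increases $|x|$ (geometric jumps, pushes and blockings move particles leftwards or leave them put) and a rightward sweep never decreases it, except for the wall particles, whose $R$-move $|x+\xi_1-\xi_2|$, being a reflected difference of geometrics truncated from above, may go down by one. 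This single non-triangular contribution can be split off and, up to an $O(q^2)$ modification of the kernels that is harmless for the limit of the $[t/q]$-th power, moved into $T_1$; after this adjustment $T_1$ is lower triangular and $T_2$ upper triangular for the grading by $|x|$. Since a geometric jump equals $j$ with probability $q^j(1-q)$, each particle stays put with probability $1-O(q)$ and moves by $j\ge 1$ with probability $O(q^j)$, so $T_1(q)=I+qA_1+q^2A_2+\cdots$ and $T_2(q)=I+qB_1+q^2B_2+\cdots$ with identity leading terms, as required.

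The substance of the argument is the identification $A_1+B_1=\mathcal L$, where $\mathcal L$ is the infinitesimal generator of the exponential-waiting-time model $(Y(t))_{t\ge0}$ of section \ref{Poisson}. I would extract the coefficient of $q$ in each sweep and match it, case by case, with the pushing and blocking rules: $A_1$ should be exactly the part of $\mathcal L$ in which one non-wall particle jumps left by one, dragging its neighbours or being blocked as prescribed, and $B_1$ the analogous right-jump part together with the reflecting-boundary part coming from the wall. For the latter, expanding $R$ near $q=0$ gives $R(x,x)=1-2q+O(q^2)$, $R(x,x\pm1)=q+O(q^2)$ for the transitions staying in $\N^*$, $R(1,0)=q+O(q^2)$ and $R(0,1)=2q+O(q^2)$, which is precisely the reflecting rate structure, with the distinguished factor $2$ for the jump $0\to1$, appearing in the generator $A_k$ of Corollary \ref{theoY}. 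Granting $A_1+B_1=\mathcal L$, the transition semigroup of the (non-explosive) jump process $Y$ is $\exp(t\mathcal L)$, so Lemma \ref{T1T2} with $q=1/N$ yields
$$\lim_{N\to\infty}\bigl(T_1(\tfrac1N)T_2(\tfrac1N)\bigr)^{[Nt]}=\exp(t\mathcal L)$$
for every $t\ge0$; that is, the time-$[Nt]$ transition kernel of $X$ converges entrywise to the time-$t$ transition kernel of $Y$.

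To conclude I would upgrade this to convergence of the processes. Since $X(0)=Y(0)=0$ and both are Markov, for $0\le t_1<\dots<t_m$ the joint law of $(X([Nt_1]),\dots,X([Nt_m]))$ is the product of $\mathbf Q(\tfrac1N)^{[Nt_1]}$ and the kernels $\mathbf Q(\tfrac1N)^{[Nt_{j+1}]-[Nt_j]}$, $1\le j\le m-1$, with $\mathbf Q(\tfrac1N)=T_1(\tfrac1N)T_2(\tfrac1N)$; each of these converges, by the previous paragraph, to $\exp(t_1\mathcal L)$, respectively to $\exp((t_{j+1}-t_j)\mathcal L)$, using that $[Nt_{j+1}]-[Nt_j]$ differs from $[N(t_{j+1}-t_j)]$ by at most one and that $\mathbf Q(\tfrac1N)\to I$. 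Hence the finite-dimensional distributions of $(X([Nt]),t\ge0)$ converge to those of $(Y(t),t\ge0)$, which is the claimed convergence in distribution. I expect the main obstacle to be the verification that $A_1+B_1=\mathcal L$ --- matching the full collection of pushing and blocking moves, and the reflected-difference-of-geometrics wall kernel, against the generator of $Y$ --- together with the technical care needed to apply Lemma \ref{T1T2} in infinite dimensions: truncating to finite sub-blocks via the grading by $|x|$ and disposing of the non-triangular wall contribution through the $O(q^2)$-approximate refactorisation above. The remainder is routine bookkeeping.
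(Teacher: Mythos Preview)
Your approach is the paper's: set $T_1(q)$ and $T_2(q)$ to be the two half-step kernels of $X$ and invoke Lemma \ref{T1T2}; the paper's proof consists of exactly those two lines and nothing more. You go further than the paper in flagging that $T_2$ is not literally upper triangular (the wall particle's $R$-move can decrease $|x|$), in sketching the identification $A_1+B_1=\mathcal L$ via the first-order expansion of $R$, and in spelling out the passage to finite-dimensional distributions --- all points the paper leaves entirely implicit.
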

 \begin{proof} It follows from Lemma \ref{T1T2} by taking 
\[
\left\{
    \begin{array}{l}
       T_1(q)(x,y)=\P(X(n+\frac{1}{2})=y\vert X(n)=x), \\
       \\
        T_2(q)(x,y)=\P(X(n+1)=y\vert X(n+\frac{1}{2})=x),
    \end{array}
\right. 
\]
for $x,y\in GT_k$.
\end{proof} 
With the help of the identities of Proposition \ref{explicitPd}, Theorem \ref{maintheo} and Lemma 2.21 of \cite{BorodinFerrari} imply that the process $$(X^k([Nt]),t\ge 0)$$ converges towards a Markov process with infinitesimal  generator equal to $A_k$ as $N$ goes to infinity. The convergence stated in  Proposition \ref{conv0}  achieves the proof of Corollary  \ref{theoY}.

\subsection{Proof of Theorem \ref{theoMat}}
Let $(x_N)_{N\ge 1}$ be a sequence of elements of $\mathcal{W}^+_{k+1}$ such that $\frac{x_N}{N}$ converges to $x\in \mathcal{C}_{k+1}$ as $N$ goes to infinity and $(\nu_N)_{N\ge 1}$ be a sequence  of probability measures on $\mathcal{W}^+_{k+1}$  defined by 
$$\nu_N=\sum_{y\in \mathcal{W}^+_{k+1}}R_k(x_N,y)\delta_{\frac{1}{N}y}.$$
Propositions \ref{explicitPd} and \ref{eigenvalues} imply that the measure $\nu_N$ converges to the measure $p_{k+1}$ defined in Proposition \ref{eigenvalues} as $N$ goes to infinity. Theorem \ref{theoMat} follows.

\end{document}